\numberwithin{equation}{section}
\newtheorem{theorem}{Theorem}[section]
\newtheorem{lemma}{Lemma}[section]
\newtheorem{definition}{Definition}[section]
\newtheorem{OldTheorem}{Theorem}
\DeclareMathOperator{\sgn}{sgn}
\newcommand {\V }[1]{{\rm V}\left(#1\right)}
\def\BV{{\rm BV\,}}
\def\PI{\Uppi}
\def\TPI{\tilde\Uppi}
\def\FI{\varphi_*}
\def\ZC{\ensuremath{\mathbb C}}
\def\ZI{\ensuremath{\mathbb I}}
\def\ZN{\ensuremath{\mathbb N}}
\def\zP{\ensuremath{\mathcal P}}
\def\ZR{\ensuremath{\mathbb R}}
\def\ZT{\ensuremath{\mathbb T}}
\def\ZZ{\ensuremath{\mathbb Z}}
\def\deff{\overset{\text{def}}{=}}
\def\a{\textsc{\char13}}
\def\md#1#2\emd{
\ifx0#1\begin{equation*} #2 \end{equation*}\fi  
\ifx1#1\begin{equation}#2\end{equation}\fi   
\ifx2#1\begin{align*}#2\end{align*}\fi   
\ifx3#1\begin{align}#2\end{align}\fi    
\ifx4#1\begin{gather*}#2\end{gather*}\fi  
\ifx5#1\begin{gather}#2\end{gather}\fi   
\ifx6#1\begin{multline*}#2\end{multline*}\fi  
\ifx7#1\begin{multline}#2\end{multline}\fi  
\ifx8#1\begin{equation*}\begin{split}#2\end{split}\end{equation*}\fi
\ifx9#1\begin{equation}\begin{split}#2\end{split}\end{equation}\fi
}
\def\env@cases{%
  \let\@ifnextchar\new@ifnextchar
  \left\lbrace
  \def\arraystretch{0.9}%
  \array{@{}l@{\quad}l@{}}}
\newcommand{\e }[1]{(\ref{#1})}
\newcommand{\lem }[1]{Lemma \ref{#1}}
\newcommand{\trm }[1]{Theorem \ref{#1}}
\newcommand{\otrm }[1]{Theorem \ref{#1}}
\title{On Generalizations of Fatou's Theorem in $L^p$ for Convolution Integrals with General Kernels}%
\author{
  Mher Safaryan \\
  Yerevan State University, Armenia\\
  Institute of Mathematics, Armenian Academy of Sciences\\
  KAUST, KSA \\
  \texttt{mher.safaryan@gmail.com}
}
\begin{document}
\maketitle

\begin{abstract}
We prove Fatou-type theorem on almost everywhere convergence of convolution integrals in spaces $L^p\,(1<p<\infty)$ for general kernels, forming an approximate identity. For a wide class of kernels we show that obtained convergence regions are optimal in some sense. It is established a weak boundedness in $L^p\,(1\le p<\infty)$ of the corresponding maximal operator.
\end{abstract}

\section{Introduction}\label{fatou-intro}
The following remarkable theorems of Fatou \cite{Fat} play significant role in the study
of boundary value problems of analytic and harmonic functions.

\begin{OldTheorem}[Fatou, 1906]\label{OldFatou-1}
Any bounded analytic function on the unit disc $D=\{z\in \mathbb{C}:\, |z|<1\}$ has non-tangential limit for almost all boundary points.
\end{OldTheorem}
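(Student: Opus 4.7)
The plan is to reduce the theorem to a statement about Poisson integrals of boundary data and then control a non-tangential maximal operator by the Hardy--Littlewood maximal operator on the circle.

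First I would show that a bounded analytic $f$ on $D$ possesses boundary data in a useful form. Expanding $f(z)=\sum_{n\ge 0}a_n z^n$ and integrating in $\theta$, one gets $\frac{1}{2\pi}\int_0^{2\pi}|f(re^{i\theta})|^2\,d\theta=\sum|a_n|^2 r^{2n}\le\|f\|_\infty^2$, so $\sum|a_n|^2<\infty$. Hence the $L^2$ function $f^*(\theta):=\sum a_n e^{in\theta}$ exists, in fact $f^*\in L^\infty(\partial D)$ with $\|f^*\|_\infty\le\|f\|_\infty$, and $f(re^{i\theta})=(P_r\ast f^*)(\theta)$, where $P_r$ is the Poisson kernel. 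So it suffices to prove: for $g\in L^\infty(\partial D)$, the Poisson integral $u(re^{i\theta}):=(P_r\ast g)(\theta)$ converges non-tangentially to $g(\theta_0)$ for a.e.\ $\theta_0$.

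Next I would introduce, for aperture $\alpha>0$, the Stolz region $\Gamma_\alpha(\theta_0)=\{re^{i\theta}:|\theta-\theta_0|<\alpha(1-r)\}$ and the non-tangential maximal function $N_\alpha g(\theta_0)=\sup_{z\in\Gamma_\alpha(\theta_0)}|u(z)|$. The crux is the pointwise estimate $N_\alpha g(\theta_0)\le C_\alpha Mg(\theta_0)$, where $M$ is the Hardy--Littlewood maximal operator on the circle. To get this, I would use the bound $P_r(\varphi)\le C\min\bigl(\tfrac{1}{1-r},\tfrac{1-r}{\varphi^2}\bigr)$ and verify that when $z=re^{i\theta}\in\Gamma_\alpha(\theta_0)$ one has $P_r(\theta-t)\le C_\alpha P_r(\theta_0-t)$ for all $t$; dyadic decomposition of the integral $\int P_r(\theta_0-t)|g(t)|\,dt$ into annuli $|t-\theta_0|\sim 2^k(1-r)$ then majorises it by a constant times $Mg(\theta_0)$.

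Once $N_\alpha g\le C_\alpha Mg$ is in hand, the theorem follows by a standard density argument. For continuous $g$, uniform approximation and the approximate-identity property of $P_r$ yield non-tangential convergence to $g(\theta_0)$ at every $\theta_0$. For general $g\in L^\infty\subset L^1$ and $\varepsilon>0$, pick continuous $g_\varepsilon$ with $\|g-g_\varepsilon\|_{L^1}<\varepsilon$; then
\[
\limsup_{z\to e^{i\theta_0},\,z\in\Gamma_\alpha}|u(z)-g(\theta_0)|\le N_\alpha(g-g_\varepsilon)(\theta_0)+|g-g_\varepsilon|(\theta_0),
\]
and the set where this exceeds $\lambda$ has measure $\le C(M(g-g_\varepsilon)>\lambda/C)\le C\varepsilon/\lambda$ by the weak-type $(1,1)$ bound for $M$. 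Letting $\varepsilon\downarrow 0$ then $\lambda\downarrow 0$ gives a.e.\ non-tangential convergence.

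The main obstacle I expect is the kernel comparison $P_r(\theta-t)\le C_\alpha P_r(\theta_0-t)$ uniformly for $re^{i\theta}\in\Gamma_\alpha(\theta_0)$; the rest is classical machinery, but this geometric estimate is what genuinely uses boundedness of the aperture and is responsible for the $\alpha$-dependence of the constant.
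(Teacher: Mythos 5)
The paper states this theorem as classical background (Fatou, 1906) and does not prove it; it appears only in the introduction as historical motivation. So there is no ``paper's own proof'' to compare against, and I will review your sketch on its own terms.

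Your outline is the standard modern proof and is correct in its approach. A few remarks on gaps you would need to fill. First, the claim ``$f^*\in L^\infty(\partial D)$ with $\|f^*\|_\infty\le\|f\|_\infty$'' does not follow from $\sum|a_n|^2<\infty$ alone, which only yields $f^*\in L^2$; you would need to observe that $f_r:=f(re^{i\cdot})\to f^*$ in $L^2$, hence a.e.\ along a subsequence, and combine that with $|f_r|\le\|f\|_\infty$ (or invoke Banach--Alaoglu on the closed ball of $L^\infty$). This is not a real obstruction, since your density argument only needs $f^*\in L^1$, which $L^2$ already provides. Second, the identity $f(re^{i\theta})=(P_r\ast f^*)(\theta)$ deserves a word of justification (for instance by matching Fourier coefficients of both sides, both being continuous in $\theta$ for fixed $r<1$). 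The crux you isolate, namely $P_r(\theta-t)\le C_\alpha\,P_r(\theta_0-t)$ uniformly for $re^{i\theta}\in\Gamma_\alpha(\theta_0)$, is indeed where the non-tangential geometry enters; it is a short consequence of $|\theta_0-t|\le\alpha(1-r)+|\theta-t|$ together with the two-sided bound $P_r(\varphi)\asymp(1-r)/\bigl((1-r)^2+\varphi^2\bigr)$. Finally, to get ``non-tangential limit'' (over every aperture) you should observe that taking the a.e.\ statement for a countable sequence $\alpha\to\infty$ gives the full claim.

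It is worth noting that the key mechanism in your proof---controlling a non-tangential maximal operator by the Hardy--Littlewood maximal function via dyadic decomposition of the kernel---is precisely what the paper abstracts to general approximate identities; compare \lem{1.Phi*ineq1} and \trm{1.weakpp}, where the explicit Poisson bound is replaced by the hypothesis $C_\varphi=\sup_{0<r<1}\|\varphi_r^*\|_1<\infty$ and the same geometric inequality is extracted in the abstract setting. Your argument is the specialization to $\varphi_r=P_r$ and $\lambda(r)=\alpha(1-r)$, plus the analytic-function preliminaries that produce the boundary datum.
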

\begin{OldTheorem}[Fatou, 1906]\label{OldFatou-2}
If a function $\mu$ of bounded variation is differentiable at $x_0\in \ZT$, then the Poisson integral
\md0
\zP_r(x,d\mu)=\frac{1}{2\pi }\int_\ZT \frac{1-r^2}{1-2r\cos (x-t)+r^2} d\mu(t)\
\emd
converges non-tangentially to $\mu'(x_0)$ as $r\to 1$.
\end{OldTheorem}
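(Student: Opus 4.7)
The plan is to normalize so that $\mu'(x_0)=0$, then convert the Stieltjes integral into a Lebesgue integral against $P_r'$ via integration by parts, and exploit the differentiability hypothesis to make the Lebesgue integral small near $x_0$ while the decay of $P_r$ handles the region away from $x_0$.

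To begin, since $\tfrac{1}{2\pi}\int_\ZT P_r(x-t)\,dt=1$, set $\nu(t):=\mu(t)-\mu(x_0)-\mu'(x_0)(t-x_0)$. Then $\nu$ is of bounded variation with $d\nu=d\mu-\mu'(x_0)\,dt$, hence $\zP_r(x,d\mu)-\mu'(x_0)=\zP_r(x,d\nu)$; moreover differentiability of $\mu$ at $x_0$ translates into $\nu(x_0)=0$ together with the one-point estimate $|\nu(x_0+h)|=o(|h|)$ as $h\to 0$. It thus suffices to show $\zP_r(x,d\nu)\to 0$ as $(r,x)\to(1,x_0)$ along any Stolz region $|x-x_0|\le C_0(1-r)$.

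Given $\varepsilon>0$, fix $\delta>0$ with $|\nu(x_0+h)|\le\varepsilon|h|$ for $|h|\le\delta$, and split the integral into a near part over $\{|t-x_0|<\delta\}$ and a far part. For the far part, once $r$ is close to $1$ we have $|x-t|\ge\delta/2$, so $P_r(x-t)=O((1-r)/\delta^2)$ uniformly, and combined with the finite total variation of $\nu$ the far contribution is $O(1-r)$. For the near part, integration by parts on the arc $(x_0-\delta,x_0+\delta)$ produces boundary terms of the same order, together with the Lebesgue integral $\int_{|t-x_0|<\delta}P_r'(x-t)\,\nu(t)\,dt$. The estimate $|\nu(t)|\le\varepsilon|t-x_0|$ dominates this integral by $\varepsilon\int_{|s|<\delta}|s|\,|P_r'(x-x_0-s)|\,ds$. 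Using the standard pointwise bound $|P_r'(\theta)|\le C(1-r)|\theta|/((1-r)^2+\theta^2)^2$, the rescaling $\theta=(1-r)v$, and the triangle inequality $|s|\le|x-x_0-s|+|x-x_0|$, this first-moment integral is shown to be bounded by a constant $C(C_0)$ depending only on the non-tangential aperture. Letting $r\to 1$ first and then $\varepsilon\to 0$ finishes the proof.

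The principal obstacle is the uniform control of $\int|s|\,|P_r'(x-x_0-s)|\,ds$ across the whole Stolz region. A crude bound using $\int|P_r'|\sim (1-r)^{-1}$ would lose a factor of $(1-r)^{-1}$; it is precisely the extra factor $|s|$ together with the Stolz condition $|x-x_0|\lesssim 1-r$ that compensates and produces a constant independent of $r$ and $x$. Without the non-tangential restriction this estimate fails, which explains why the conclusion cannot be strengthened to radial-only convergence from a weaker modulus of differentiability; the remaining steps are routine manipulations of Stieltjes integrals and pointwise estimates on the Poisson kernel.
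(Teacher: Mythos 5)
The paper does not prove Theorem~\ref{OldFatou-2}: it states Fatou's 1906 result as classical background, and the generalization to arbitrary approximate identities (under the condition $\PI(\lambda,\varphi)<\infty$, which for the Poisson kernel reduces to the Stolz condition $\limsup_{r\to1}\lambda(r)/(1-r)<\infty$) is attributed to \cite{KarSaf1}, not proved here. There is therefore no proof in this paper to compare against; I address your argument on its own terms.

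Your proof is correct and is the standard direct argument. Two points deserve flagging. First, the function $\nu(t)=\mu(t)-\mu(x_0)-\mu'(x_0)(t-x_0)$ is not $2\pi$-periodic; this is harmless because the signed measure $d\nu=d\mu-\mu'(x_0)\,dt$ is genuinely periodic and the function $\nu$ is only invoked on the short arc $(x_0-\delta,x_0+\delta)$, but it is worth saying so explicitly. Second, and more substantively, the decisive step is the first-moment bound
\[
\int_{|s|<\delta}|s|\,|P_r'(x-x_0-s)|\,ds
\le |x-x_0|\int_{-\pi}^{\pi}|P_r'(\theta)|\,d\theta
  + \int_{-\pi}^{\pi}|\theta|\,|P_r'(\theta)|\,d\theta,
\]
and the two integrals on the right do behave as you claim: the pointwise estimate $|P_r'(\theta)|\lesssim (1-r)|\theta|/((1-r)^2+\theta^2)^2$, obtained from $P_r'(\theta)=-2r(1-r^2)\sin\theta/(1-2r\cos\theta+r^2)^2$ together with $1-2r\cos\theta+r^2\gtrsim(1-r)^2+\theta^2$, and the substitution $\theta=(1-r)v$, give $\int|P_r'|\lesssim(1-r)^{-1}$ and $\int|\theta|\,|P_r'(\theta)|\,d\theta\lesssim1$. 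The Stolz condition $|x-x_0|\le C_0(1-r)$ then bounds the first term by a constant depending only on $C_0$, so the near-part contribution is at most $\varepsilon\cdot C(C_0)$ uniformly in $r$; the far part is $O((1-r)/\delta^2)$ and the integration-by-parts boundary terms are $O(\varepsilon(1-r)/\delta)$, so letting $r\to1$ and then $\varepsilon\to0$ finishes. Your diagnosis, that the extra factor $|s|$ combined with the Stolz restriction is exactly what rescues the naive loss of $(1-r)^{-1}$, correctly identifies where the non-tangential hypothesis is used.
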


These two fundamental theorems, have many applications in different mathematical theories including analytic functions, Hardy spaces, harmonic analysis, differential equations and etc. There are various generalization of these theorems in different aspects. Almost everywhere convergence over some tangential approach regions investigated by Nagel and Stein \cite{NaSt}, Di Biase \cite{DiBi, DiBi2}, Di Biase-Stokolos-Svensson-Weiss \cite{BSSW}.
Sj\"{o}gren \cite{Sog1,Sog2,Sog3}, R\"{o}nning \cite{Ron1,Ron2,Ron3}, Katkovskaya-Krotov \cite{Kro}, Krotov \cite{Kro2}, Brundin \cite{Bru}, Mizuta-Shimomura \cite{MizShi}, Aikawa \cite{Aik3} studied fractional Poisson integrals with respect to the fractional power of the Poisson kernel and obtained some tangential convergence properties for such integrals. More precisely they considered the integrals

\md0
\zP_r^{(1/2)}(x,f) \deff \int_\ZT P^{(1/2)}_r(x-t) f(t)\,dt = \frac{1}{c(r)}\int_\ZT [P_r(x-t)]^{1/2} f(t)\,dt,
\emd
where
\md0
P_r(x) = \frac{1-r^2}{1-2r\cos x+r^2}, \quad 0<r<1, \quad x\in\ZT
\emd
is the Poisson kernel for the unit disk and
\md0
c(r)=\int_\ZT [P_r(t)]^{1/2}\,dt \asymp (1-r)^{1/2}\log\frac{1}{1-r}
\emd
is the normalizing coefficient. Here, the notation $A\asymp B$ means double inequality $c_1 A \le B\le c_2 A$ for some positive absolute constants $c_1$ and $c_2$, which might differ in each case.

\begin{OldTheorem}[see \cite{Sog1,Ron1,Ron2}]\label{OldRonningSogren}
For any $f\in L^p(\ZT),\, 1\le p\le \infty$
\md1\label{0.frac-poisson}
\lim_{r\to 1}\zP_r^{(1/2)} (x+\theta(r),f) = f(x)
\emd
almost everywhere $x\in\ZT$, whenever
\md1\label{0.theta-bound-P0.5}
|\theta(r)|\le
\begin{cases}
c(1-r)\left(\log\frac{1}{1-r}\right)^p & \quad\text{if}\quad 1\le p<\infty,\\
c_\alpha(1-r)^\alpha,\,\text{for any }\,0<\alpha<1 & \quad\text{if}\quad p=\infty,
\end{cases}
\emd
where $c_\alpha>0$ is a constant, depended only on $\alpha$.
\end{OldTheorem}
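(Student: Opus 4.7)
I would follow the Banach-principle template: introduce the shifted maximal operator
\md0
M_\theta^{(1/2)}f(x)\deff\sup_{0<r<1}\bigl|\zP_r^{(1/2)}(x+\theta(r),f)\bigr|,
\emd
prove a weak $(p,p)$ inequality for $M_\theta^{(1/2)}$ under the admissibility bound on $|\theta(r)|$, and then verify pointwise convergence on a dense class. For continuous $f$ the convergence $\zP_r^{(1/2)}(x+\theta(r),f)\to f(x)$ is immediate, since $P_r^{(1/2)}$ is an approximate identity and $\theta(r)\to 0$; hence all the content of the theorem sits in the maximal bound.

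\textbf{Kernel and dyadic decomposition.} The proof would rest on the sharp two-sided estimate
\md0
P_r^{(1/2)}(y)\asymp\frac{1}{\log\frac{1}{1-r}}\cdot\frac{1}{\sqrt{(1-r)^2+y^2}},
\emd
which distributes the mass of $P_r^{(1/2)}$ nearly uniformly, with weight $\asymp 1/\log\frac{1}{1-r}$, over the $K\asymp\log\frac{1}{1-r}$ dyadic annuli $A_{r,k}=\{t:(1-r)2^k\le|t-x-\theta(r)|<(1-r)2^{k+1}\}$, $0\le k\le K$, plus the core $|t-x-\theta(r)|\le 1-r$. Writing $y=x+\theta(r)$ and splitting $\int P_r^{(1/2)}(y-t)|f(t)|\,dt$ into annular contributions reduces $M_\theta^{(1/2)}$ to controlling dyadic averages of $|f|$ over intervals centred at $y$, not at $x$.

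\textbf{Weak-type bound via a translated maximal function.} The crux is precisely this off-centre issue. I would split the dyadic range at $k_\ast=\lceil\log_2(|\theta(r)|/(1-r))\rceil$. For $k\ge k_\ast$ the annulus $A_{r,k}$ fits inside an interval of the same order centred at $x$, and its contribution is dominated by the classical Hardy--Littlewood maximal function $M_{HL}f(x)$. For $k<k_\ast$ the annulus sits inside an interval of length $\asymp|\theta(r)|$ centred at $x$; applying H\"older on this enlarged interval and then absorbing the enlargement factor $(1-r)2^k/|\theta(r)|$ produces a bound in terms of $\bigl(M_{HL}(|f|^p)(x)\bigr)^{1/p}$. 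The admissibility bound $|\theta(r)|\le c(1-r)\log^p\frac{1}{1-r}$ is exactly what is needed so that the logarithmic losses summed over $k<k_\ast$ are compensated by the $p$-th power of $\log\frac{1}{1-r}$, yielding
\md0
M_\theta^{(1/2)}f(x)\le C\bigl(M_{HL}(|f|^p)(x)\bigr)^{1/p},
\emd
whence weak $(p,p)$ follows from the classical weak $(1,1)$ bound for $M_{HL}$ applied to $|f|^p\in L^1$.

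\textbf{The endpoint $p=\infty$ and the main difficulty.} For $p=\infty$ I would argue directly at Lebesgue points: given $|\theta(r)|\le c_\alpha(1-r)^\alpha$ with $\alpha<1$, choosing any $\beta\in(\alpha,1)$ one checks that $P_r^{(1/2)}$ concentrates $1-o(1)$ of its mass in $\{|t-x|\le(1-r)^\beta\}$, so the Lebesgue differentiation theorem controls the near part while the tail is $o(1)\cdot\|f\|_\infty$. The step I expect to be most delicate is the $k<k_\ast$ bookkeeping: one must track the enlargement losses precisely enough that the exponent $p$ in the admissibility condition matches the exponent of $\log\frac{1}{1-r}$ produced by H\"older, thereby showing both the weak $(p,p)$ bound and the sharpness of the factor $\log^p\frac{1}{1-r}$.
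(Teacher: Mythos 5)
The paper does not itself prove this theorem: it is quoted from the literature (Sj\"ogren, R\"onning), and the paper's contribution is to recover it as a corollary of the general $\PI_p$-machinery — Theorems \ref{1.fatouLp}--\ref{1.fatouLp-example} for $1<p<\infty$, and the earlier conditions $\PI(\lambda,\varphi)<\infty$ and $\PI_\infty(\lambda,\varphi)=0$ from \cite{KarSaf1} for $p=1$ and $p=\infty$. Your direct proof for $1\le p<\infty$ is essentially sound and in fact mirrors, in a kernel-specific way, the same dyadic-annulus mechanism that drives Lemmas \ref{1.lemma-TAbound} and \ref{1.lemma-supmulambda}: on each dyadic annulus you control the off-centre average by an average centred at $x$, apply H\"older on the short ($k<k_\ast$) annuli, and observe that the factor $(\log\frac{1}{1-r})^p$ in the admissibility bound is exactly what absorbs the geometric sum $\sum_{k<k_\ast}2^{-k/p}$ against $|\theta(r)|^{1/p}(1-r)^{-1/p}$. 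The pointwise estimate $M^{(1/2)}_\theta f\le C\,(M_{HL}|f|^p)^{1/p}$ is precisely \eqref{1.Phi*-bound} specialized to $\varphi_r=P_r^{(1/2)}$, and the rest of the argument (dense class, weak $(p,p)$) is standard. So for $1\le p<\infty$ your route is a legitimate, more concrete, variant of the paper's abstract argument.

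However, your $p=\infty$ argument contains a genuine error. You claim that for any $\beta\in(\alpha,1)$ the kernel $P_r^{(1/2)}$ concentrates $1-o(1)$ of its mass on $\{|t|\le(1-r)^\beta\}$ as $r\to1$. This is false. From the two-sided estimate $P_r^{(1/2)}(t)\asymp\bigl(\log\frac1{1-r}\bigr)^{-1}\bigl((1-r)^2+t^2\bigr)^{-1/2}$ one computes
\md0
\int_{|t|\le(1-r)^\beta}P_r^{(1/2)}(t)\,dt \asymp \frac{1}{\log\frac{1}{1-r}}\operatorname{arcsinh}\bigl((1-r)^{\beta-1}\bigr) \asymp 1-\beta,
\emd
so a positive \emph{constant} fraction of the mass always lies outside $\{|t|\le(1-r)^\beta\}$ for any fixed $\beta<1$; the kernel is heavy-tailed precisely because its mass is spread nearly uniformly across the $\asymp\log\frac1{1-r}$ dyadic scales between $1-r$ and $1$. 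Consequently, the tail term is \emph{not} $o(1)\|f\|_\infty$, and the Lebesgue-point argument you sketch does not close. This is exactly the reason the $L^\infty$ case requires the separate, delicate criterion $\PI_\infty(\lambda,\varphi)=0$ (the quantity $\limsup_{\delta\to0}\limsup_{r\to1}\int_{-\delta\lambda(r)}^{\delta\lambda(r)}\varphi_r$), introduced in \cite{KarSaf1}; the mass fraction inside the contracted window must be forced to vanish by the interplay between $\lambda(r)$ and the kernel profile, not by unconditional concentration. Your argument would be valid for the classical Poisson kernel, which does concentrate its mass near $0$, but fails for the fractional kernel which is the whole point of the statement.
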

The case of $p=1$ is proved in \cite{Sog1}, $1<p\le \infty$ is considered in \cite{Ron1}, \cite{Ron2}. Moreover, in \cite{Ron1} weak type inequalities for the maximal operator of square root Poisson integrals are established.
\begin{OldTheorem}[R\"{o}nning, 1997]\label{OldRonning}
Let $1<p<\infty$. Then the maximal operator
\md0
\zP^*_{1/2}(x,f) = \sup_{\substack{|\theta|<c(1-r)\left(\log\frac{1}{1-r}\right)^p\\1/2<r<1}} \zP_r^{(1/2)}(x+\theta,|f|)
\emd
is of weak type $(p,p)$.
\end{OldTheorem}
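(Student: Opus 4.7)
My strategy is to dominate $\zP^*_{1/2}(x,f)$ by a shifted maximal function via a dyadic analysis of the kernel, then bound this maximal function in weak $L^p$ through a Vitali covering argument. From $P_r(x)\asymp (1-r)/((1-r)^2+x^2)$ and $c(r)\asymp (1-r)^{1/2}\log\frac{1}{1-r}$, one obtains the sharp asymptotic
\[
P_r^{(1/2)}(x)\asymp \frac{1}{L(\delta+|x|)},\qquad \delta:=1-r,\ L:=\log(1/\delta).
\]
Writing $\Delta=c\delta L^p$ for the maximal tangential shift, I would dyadically decompose $\zP_r^{(1/2)}(x+\theta, |f|)$ into annuli $\{|x+\theta-t|\asymp 2^{-k}\}$. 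For scales $2^{-k}\ge\delta$, each annulus contributes at most $\frac{C}{L}\,\mathrm{avg}_{B(x+\theta,2^{-k})}|f|$, and the number of relevant dyadic scales is $\asymp L$. By pigeonhole, $\zP_r^{(1/2)}(x+\theta,|f|)>\lambda$ forces the existence of a scale $h=2^{-k}\in[\delta,\pi)$ and a point $y=x+\theta$ with $\mathrm{avg}_{B(y,h)}|f|\ge c\lambda$.

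Setting $\Delta^*(h)\asymp h(\log(1/h))^p$ for the supremal admissible shift at scale $h$ (obtained by maximizing $c\delta L(\delta)^p$ over $\delta\le h$), the previous step yields the inclusion
\[
\{\zP^*_{1/2}(\cdot,f)>\lambda\}\;\subset\;\bigcup_{h\in(0,\pi]}\bigl(\{y:\mathrm{avg}_{B(y,h)}|f|>c\lambda\}+B(0,\Delta^*(h))\bigr).
\]
For each $x$ in the left-hand side, choose a witness $(h_x,y_x)$, apply Vitali to the balls $\{B(y_j,h_j)\}$ to extract a disjoint subfamily, and use $\int_{B(y_j,h_j)}|f|>c\lambda h_j$ together with H\"older's inequality to deduce $\|f\chi_{B(y_j,h_j)}\|_p^p\ge c\lambda^p h_j$, hence $\sum_j h_j\le\|f\|_p^p/(c\lambda^p)$. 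The level set is then covered by $\bigcup_j B(y_j,\Delta^*(h_j))$, whose total measure is at most $2\sum_j\Delta^*(h_j)\asymp\sum_j h_j(\log(1/h_j))^p$.

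\textbf{Main obstacle.} The technical heart is to deduce $\sum_j h_j(\log(1/h_j))^p\le C\|f\|_p^p/\lambda^p$ rather than merely $\sum_j h_j$; a crude bound loses the logarithmic factor. The resolution -- and the reason the exponent $p$ in the tangential region is exactly right -- is to group the Vitali subfamily into dyadic classes $h_j\in[2^{-m-1},2^{-m}]$, within which $(\log(1/h_j))^p\asymp m^p$ is essentially constant, and apply a refined $L^p$-estimate per class: namely that the condition $\mathrm{avg}_{B(y_j,h_j)}|f|>c\lambda$ with disjoint $B(y_j,h_j)$ forces the $L^p$-mass of $f$ to distribute across the classes in a way that, when weighted by $m^p$, still sums to $\|f\|_p^p/\lambda^p$. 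The exponent $p$ in $(\log\tfrac{1}{1-r})^p$ is precisely the one for which the geometric summation over dyadic classes converges; strengthening the tangential growth beyond $(\log\tfrac{1}{1-r})^p$ would break the argument, invalidating the weak-$(p,p)$ bound and reflecting the sharpness encoded in Theorem~\ref{OldRonningSogren}.
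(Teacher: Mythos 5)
There is a genuine gap, and it is in the reduction to $\Delta^*(h)$ rather than in the refinement you flag as the ``main obstacle.'' When you pigeonhole $\zP^{(1/2)}_r(y,|f|)>\lambda$ at parameter $\delta=1-r$, you obtain a scale $h=2^k\delta\in[\delta,\pi)$ with $\mathrm{avg}_{B(y,h)}|f|>c\lambda$, but the admissible tangential shift for this particular witness is $|\theta|<c\,\delta\,(\log\tfrac1\delta)^p$, \emph{not} $c\,h\,(\log\tfrac1h)^p$. Since $\delta$ can be far smaller than $h$, replacing $\delta(\log\tfrac1\delta)^p$ by the envelope $\Delta^*(h)=\sup_{\delta\le h}\delta(\log\tfrac1\delta)^p\asymp h(\log\tfrac1h)^p$ discards exactly the coupling between the averaging scale and the shift size. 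This is not a loss that can be recouped by the dyadic-class trick you sketch: take $f=\lambda\,\ZI_{[0,h]}$, so $\|f\|_p^p=\lambda^p h$. A direct computation shows $|\{\zP^*_{1/2}(\cdot,f)>c\lambda\}|\asymp h$, but your containment gives
\begin{equation*}
\{\zP^*_{1/2}(\cdot,f)>c\lambda\}\subset[0,h]+B\bigl(0,\Delta^*(h)\bigr),
\end{equation*}
a set of measure $\asymp h(\log\tfrac1h)^p$, and Vitali applied to the enlarged balls produces a single ball with $h_1\asymp h$, so $\sum_j\Delta^*(h_j)\asymp h(\log\tfrac1h)^p\gg\|f\|_p^p/\lambda^p$. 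Thus the inequality $\sum_j h_j(\log\tfrac1{h_j})^p\lesssim\|f\|_p^p/\lambda^p$ is simply false under the hypotheses you retain (disjoint $B(y_j,h_j)$, $\mathrm{avg}_{B_j}|f|>c\lambda$); no grouping into dyadic classes can restore it, because the needed information---that large $h$ only arises together with small $\delta$, hence a small shift---has already been thrown away. (There is also a secondary issue: Vitali must be applied to the enlarged balls $B(y_x,\Delta^*(h_x))$, not to $B(y_x,h_x)$, to cover the level set; disjointness of the small balls then follows, but this only sharpens the counterexample.)

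The paper avoids the problem by never pigeonholing in the scale variable. In Theorem~\ref{1.weakpp} the maximal operator is dominated pointwise by $(M|f|^p)^{1/p}$, and the crucial Lemma~\ref{1.lemma-TAbound} keeps track of \emph{both} the distance $|x-y|\asymp 2^kA\mu(r)$ and the kernel scale $\mu(r)=\FI(r)/\|\varphi_r\|_\infty$: the hypothesis $\lambda(r)\le C\mu(r)\FI(r)^{-p}$ forces $\FI(r)^p\lesssim(2^kA)^{-1}$ precisely when the evaluation point sits $2^kA\mu(r)$ away, producing the geometric factor $2^{-k/p}$ that makes the series summable. This coupling is exactly what your reduction forgets. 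If you want to salvage a covering proof, the witness data must record the pair $(h,\delta)$ and the Vitali family must be chosen so that the enlarged balls have radius $\asymp\delta(\log\tfrac1\delta)^p$ while the small balls remain disjoint---a genuinely two-parameter stopping-time argument, considerably more delicate than what you wrote.
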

In \cite{Kro} weighted strong type inequalities for the same operators are established. Related questions were considered also in higher dimensions. Saeki \cite{Sae} studied Fatou type theorems for non-radial kernels. Kor\'anyi \cite{Kor} extended Fatou\a s theorem for the Poisson-Szeg\"{o} integral. In \cite{NaSt} Nagel and Stein proved that the Poisson integral on the upper half space of $\ZR^{n+1}$ has the boundary limit at almost every point within a certain approach region, which is not contained in any non-tangential approach regions. Sueiro \cite{Sue} extended Nagel-Stein\a s result for the Poisson-Szeg\"{o} integral. Almost everywhere convergence over tangential tress (family of curves) were investigated by Di Biase \cite{DiBi}, Di Biase-Stokolos-Svensson-Weiss \cite{BSSW}. In \cite{Kro} and \cite{Aik3} higher dimensional cases of fractional Poisson integrals are studied as well.

The current paper is the development of the authors investigation in \cite{KarSaf1}. In \cite{KarSaf1} we introduced $\lambda(r)-$convergence, which is a generalization of non-tangential convergence in the unit disc, where $\lambda(r)$ is a function
\md1\label{0.lambdar-def}
\lambda:(0,1)\to(0,\infty) \quad\text{with}\quad \lim_{r\to1}\lambda(r)=0.
\emd
Let $\ZT = \ZR/2\pi \ZZ$ be the unit circle. For a given $x\in \ZT$ we define $\lambda(r,x)$ to be the interval $[x-\lambda(r), x+\lambda(r)]$ in $\ZT$. In case of $\lambda(r)\ge \pi$ we assume $\lambda(r,x)=\ZT$. Let $F_r(x)$ be a family of functions from $L^1(\ZT)$, where $r$ varies in $(0,1)$. We say $F_r(x)$  is $\lambda(r)-$convergent at a point $x\in \ZT$ to a value $A$, if
\md0
\lim_{r\to 1}\sup_{\theta\in \lambda(r,x)}|F_r(\theta)-A|=0.
\emd
Otherwise this relation will be denoted by
\md1\label{1.lambdar-def-1}
\lim_{\stackrel{r\to 1}{\theta\in\lambda(r,x)}}F_r(\theta)=A.
\emd
We say $F_r(x)$ is $\lambda(r)-$divergent at $x\in\ZT$ if \e{1.lambdar-def-1} does not hold for any $A\in\ZR$.

There are at least two ways to interpret $\lambda(r)-$convergence. First, we can associate the function $\lambda(r)$ with regions
\md0
\Omega^x_\lambda \deff \{r e^{i\theta}\in\ZC\colon r\in(0,1),\,|\theta-x|<\lambda(r)\}\subset D,\quad x\in\ZT.
\emd
Then $\lambda(r)-$convergence for $F_r(x)$ at some point $x\in\ZT$ becomes convergence over the region $\Omega^x_\lambda$ for $\tilde{F}(r e^{i x})=F_r(x)$. It is clear, that the non-tangential convergence in the unit disc is the case of $\lambda(r)=c(1-r)$. Second, we can think of it as one dimensional \say{pointwise-uniform} convergence on $\ZT$, meaning that $\lambda(r)-$convergence at a point $x\in\ZT$ depends only on values of functions on $\lambda(r,x)$ which contracts to $x$.

Denote by $\BV(\ZT)$ the functions of bounded variation on $\ZT$. Any given function of bounded variation $\mu\in\BV(\ZT)$ defines a Borel measure on $\ZT$. We consider the family of integrals
\md1\label{1.Phi}
\Phi_r(x,d\mu) \deff \int_\ZT \varphi_r(x-t)\,d\mu(t), \quad \mu\in\BV(\ZT),
\emd
where $0<r<1$ and kernels $\varphi_r\in L^\infty (\ZT)$ form an \emph{approximate identity} defined as follows:

\begin{definition}
We define an approximate identity as a family $\left\{\varphi_r\right\}_{0<r<1}\subset L^{\infty}(\ZT)$ of functions satisfying the following conditions:
\begin{itemize}
\item[$\Phi1.$] $\int_\ZT \varphi_r(t)\,dt\to 1$  as $r\to 1$,
\item[$\Phi2.$] $\varphi_r^*(x) \deff \sup\limits_{|x|\le |t|\le \pi}|\varphi_r(t)|\to 0$ as $r\to 1,\quad 0<|x|\le\pi,$
\item[$\Phi3.$] $C_\varphi \deff \sup\limits_{0<r<1}\|\varphi_r^*\|_1<\infty.$
\end{itemize}
\end{definition}

Approximate identities with the above definition were investigated in \cite{Ben, Katz, KarSaf1}.
Notation $\varphi_r$ should not be confused with the classical dilation approximate identities \cite{Ste}.
In case of $\mu$ is absolutely continuous and $d\mu(t)=f(t)dt$ for some $f\in L^p(\ZT),\,1\le p\le\infty$, then the integral \e{1.Phi} will be denoted as $\Phi_r(x,f)$.

Carlsson \cite{Car} obtained almost everywhere convergence result for non-negative approximate identities with regular level sets, which is defined by the following condition:
\begin{equation*}
\sup\left\{|x| \colon x\in L(r,s)\right\} \le C|L(r,s)|, \text{ for all } 0<r<1,\,s>0,
\end{equation*}
where $C$ is some constant and $L(r,s) = \left\{x\in\ZT \colon \varphi_r(x)>s \right\}$.

\begin{OldTheorem}[Carlsson, 2008]\label{OldCarlsson}
Let $\{\varphi_r(x)\ge 0\}$ be a non-negative approximate identity with regular level sets and $\rho(r) = \|\varphi_r\|_q^{-p}$, where $1\le p<\infty$ and $q=p/(p-1)$ is the conjugate index of $p$. Then for any $f\in L^p(\ZT)$
\md0
\lim_{\substack{r\to1\\|\theta|<c \rho(r)}} \Phi_r(x+\theta,f) = f(x),
\emd
almost everywhere $x\in\ZT$.
\end{OldTheorem}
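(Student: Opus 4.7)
The plan is to combine a routine density argument with a weak-type maximal inequality, using the regular-level-sets hypothesis in an essential way.

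First, I would verify that for any continuous $g$ on $\ZT$, the convergence $\Phi_r(x+\theta,g)\to g(x)$ holds uniformly in $x$ and in $|\theta|<c\rho(r)$ as $r\to 1$. After the substitution $u=x+\theta-t$, the difference splits as $\int\varphi_r(u)[g(x+\theta-u)-g(x)]\,du + g(x)\bigl[\int\varphi_r-1\bigr]$: the second term vanishes by $\Phi 1$; the first is split further according to $|u|<\delta$ (handled by uniform continuity of $g$ combined with $|\theta|<c\rho(r)$ and $\int|\varphi_r|\le\int\varphi_r^*\le C_\varphi$) and $|u|\ge\delta$ (handled by $|\varphi_r(u)|\le\varphi_r^*(\delta)\to 0$ via $\Phi 2$ combined with $\|g\|_\infty<\infty$). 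Here one uses $\rho(r)\to 0$, which itself follows from the concentration of $\varphi_r$ forced by $\Phi 2$ together with $\Phi 1$.

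Second, and crucially, I would introduce the maximal operator
\[
\Phi^* f(x) = \sup_{0<r<1}\;\sup_{|\theta|<c\rho(r)} \Phi_r(x+\theta,|f|)
\]
and prove that it is of weak type $(p,p)$. Using the layer-cake decomposition,
\[
\Phi_r(x+\theta,|f|) = \int_0^{\|\varphi_r\|_\infty}\int_{(x+\theta)-L(r,s)} |f(t)|\,dt\,ds,
\]
the regular-level-sets hypothesis confines $(x+\theta)-L(r,s)$ to an interval of length $2C|L(r,s)|$ centered at $x+\theta$. I would split the $s$-integral according to whether $C|L(r,s)|\ge c\rho(r)$: on the large-level-set regime the shift $\theta$ is absorbed at the cost of a constant and the contribution is controlled by $Mf(x)\cdot\|\varphi_r\|_1$, where $M$ denotes the Hardy--Littlewood maximal function; on the small-level-set regime the interval lies inside $[x-2c\rho(r),x+2c\rho(r)]$ and a Hölder estimate using the identity $\rho(r)=\|\varphi_r\|_q^{-p}$ is required to match the $L^p$ exponent and to balance the shift against the level-set size.

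Once the maximal inequality is in hand, the a.e.\ convergence follows by the standard approximation: for $f\in L^p$ and $\epsilon>0$, take continuous $g$ with $\|f-g\|_p<\epsilon$, write
\[
\limsup_{\substack{r\to 1\\|\theta|<c\rho(r)}}|\Phi_r(x+\theta,f)-f(x)| \le \Phi^*(f-g)(x)+|g(x)-f(x)|,
\]
and conclude via the weak-type bound for $\Phi^*$ together with Chebyshev applied to $f-g$. The main obstacle I anticipate is the small-level-set regime of the weak-type estimate: the crude bound $\Phi_r(x+\theta,|f|)\le CMf(x+\theta)$ does not suffice because $\sup_{|\theta|<c\rho(r)}Mf(x+\theta)$ fails weak type $(p,p)$ in general, so the relation $\rho(r)=\|\varphi_r\|_q^{-p}$ must be exploited quantitatively to counterbalance the tangential width $c\rho(r)$.
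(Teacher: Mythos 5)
Your proof sketch is essentially sound, but it takes a genuinely different route from the one the paper uses to recover this theorem. You follow Carlsson's original strategy: a layer-cake decomposition $\Phi_r(x+\theta,|f|)=\int_0^{\|\varphi_r\|_\infty}\int_{(x+\theta)-L(r,s)}|f|\,dt\,ds$, split according to the level-set width, with the regular-level-sets hypothesis confining $(x+\theta)-L(r,s)$ to a short interval in the small-level-set regime and the integral identity $\int_0^\infty|L(r,s)|\,ds=\|\varphi_r\|_1\le C_\varphi$ absorbing the large-level-set regime into $Mf$. The paper instead splits the convolution integral directly in the variable $u$: the local piece $\int_{|u|\le\lambda(r)}\varphi_r(u)f(y-u)\,du$ is handled by a single application of H\"older plus the relation $\lambda(r)\|\varphi_r\|_q^p\lesssim 1$ (Lemma \ref{1.rough-bound}), while the tail $\int_{|u|\ge\lambda(r)}$ is controlled by a dyadic decomposition and the uniform bound $\|\varphi_r^*\|_1\le C_\varphi$ (Lemma \ref{1.Phi*ineq1}); together these give the pointwise domination $\Phi^*_\lambda f(x)\le C\bigl(M|f|^p(x)\bigr)^{1/p}$, and a.e.\ convergence follows by the same density argument you describe. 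The two decompositions are not the same: yours is on the $s$-parameter, the paper's is on $u$. What the paper's approach buys is that it does not use the regular-level-sets hypothesis at all, nor the non-negativity of $\varphi_r$ --- both are essential to your layer-cake argument --- so it establishes the theorem in the broader form stated in the Final Remarks. What your approach buys is that it stays closest to Carlsson's original proof and makes transparent exactly where the structural assumption enters: it is only needed to turn level-set width into spatial localization. Your small-level-set step would benefit from being spelled out (bounding $B$ by $\int_J|f(t)|\varphi_r(x+\theta-t)\,dt$ with $J=[x-2c\rho(r),x+2c\rho(r)]$, then applying H\"older and the exact cancellation $\|\varphi_r\|_q\rho(r)^{1/p}=1$), but the outline is correct.
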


Although \trm{OldCarlsson} gives a general connection between approximate identities and convergence regions, we will see that it can be extended to any approximate identity without regular level sets assumption. Moreover, obtained convergence regions are shown to be optimal for a wide class of kernels.
Here, the optimality of convergence regions is considered within the regions $\Omega_\lambda^x$ with $x\in\ZT$ and $\lambda(r)$ satisfying (\ref{0.lambdar-def}). More precisely, the optimality of convergence regions is understood as the optimality of the rate of $\lambda(r)$ (when $r\to1$) ensuring almost everywhere $\lambda(r)-$convergence.

In \cite{KarSaf1} we proved that the condition $\PI(\lambda,\varphi) < \infty$ with
\md0
\PI(\lambda,\varphi) = \limsup_{r\to 1}\lambda(r) \|\varphi_r\|_\infty
\emd
is necessary and sufficient for almost everywhere $\lambda(r)-$convergence of the integrals $\Phi_r(x,d\mu)$, $\mu\in\BV(\ZT)$ as well as $\Phi_r(x,f),\,f\in L^1(\ZT)$. Moreover, we proved that convergence holds at any point where $\mu$ is differentiable for the integrals $\Phi_r(x,d\mu)$ and at any Lebesgue point of $f\in L^1(\ZT)$ for the integrals $\Phi_r(x,f)$.
Thus, the condition $\PI(\lambda, \varphi)<\infty$ determines the exact rate of $\lambda(r)$ function, ensuring such convergence. In this case the rate depends only on $\|\varphi_r\|_\infty$. If the kernel $\varphi_r$ coincides with the Poisson kernel $P_r$, then $\|P_r\|_\infty \asymp \frac{1}{1-r}$ and the bound $\PI(\lambda,P)<\infty$ coincides with the well-known condition
\md1\label{0.non-tangential-condition}
\limsup_{r\to1}\frac{\lambda(r)}{1-r} < \infty,
\emd
guaranteeing non-tangential convergence in the unit disk. Furthermore, if we take the fractional Poisson kernel $P^{(1/2)}_r$, then
\md1\label{sqrootP-infnorm}
\|P^{(1/2)}_r\|_\infty = \frac{1}{c(r)}\|P^{1/2}_r\|_\infty \asymp \left((1-r)\log\frac{1}{1-r}\right)^{-1}
\emd
and we deduce \e{0.frac-poisson} when $p=1$ with an additional information about the points where the convergence occurs.

In the same paper \cite{KarSaf1}, an analogous necessary and sufficient condition was established also for almost everywhere $\lambda(r)-$convergence of $\Phi_r(x,f),\,f\in L^\infty(\ZT)$ with condition $\PI_\infty(\lambda,\varphi)=0$, where
\md0
\PI_\infty(\lambda,\varphi) = \limsup_{\delta \to 0}\limsup_{r\to 1} \int_{-\delta \lambda(r)}^{\delta \lambda(r)}\varphi_r(t)dt.
\emd
In addition, we proved that convergence holds at any Lebesgue point of $f\in L^\infty(\ZT)$.

One can easily check that in the case of Poisson kernel $P_r(t)$, for a given function $\lambda(r)$ with \e{0.lambdar-def}, the value of $\PI_\infty(\lambda,P)$ can be either $0$ or $1$, where the condition $\PI_\infty(\lambda,P)=0$ is equivalent to \e{0.non-tangential-condition}, and $\PI_\infty(\lambda,P)=1$ coincides with
\md0
\limsup_{r\to1}\frac{\lambda(r)}{1-r} = \infty.
\emd
If $\lambda(r)$ satisfies the condition \e{0.theta-bound-P0.5} with $p=\infty$, then simple calculations show that for such $\lambda(r)$ and for the square root Poisson kernel $P^{(1/2)}_r(t)$ we have $\PI_\infty(\lambda,P^{(1/2)})=0$. Hence we deduce \e{0.frac-poisson} when $p=\infty$ with an additional information about the points where the convergence occurs. Taking $\lambda(r)=(1-r)^\alpha$ with a fixed $0<\alpha<1$ we will get $\PI_\infty(\lambda,P^{(1/2)})=1-\alpha>0$, which implies the optimality of the bound \e{0.theta-bound-P0.5} in the case $p=\infty$ too.

\section{Main Results}
In this paper, we obtain similar results for the integrals $\Phi_r(x,f),\,f\in L^p(\ZT),\,1<p<\infty$ with condition $\PI_p(\lambda,\varphi)<\infty$, where
\md0
\PI_p(\lambda,\varphi) = \limsup_{r\to 1}\lambda(r) \|\varphi_r\|_\infty \FI^{p-1}(r), \quad \FI(r) = \sup_{x\in\ZT}|x \varphi^*_r(x)|.
\emd
\begin{theorem}\label{1.fatouLp}
Let $\{\varphi_r\}$  be an arbitrary approximate identity and $\lambda(r)$ satisfies the condition $\PI_p(\lambda, \varphi) < \infty$ for some $1<p<\infty$. Then for any $f\in L^p(\ZT)$
\md0
\lim_{\stackrel{r\to 1}{y\in\lambda(r,x)}} \Phi_r(y,t) = f(x),
\emd
almost everywhere $x\in\ZT$.
\end{theorem}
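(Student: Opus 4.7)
The strategy is the classical one for almost everywhere convergence: establish a weak-type $(p,p)$ inequality for the maximal operator
\[
\Phi^*(x,f) := \sup\bigl\{|\Phi_r(y,f)| : r\in(r_0,1),\ y\in\lambda(r,x)\bigr\},
\]
where $r_0<1$ is chosen close enough to $1$ that $\lambda(r)\|\varphi_r\|_\infty\FI(r)^{p-1}\le C$ holds on $(r_0,1)$ by the $\PI_p$ hypothesis; prove uniform convergence on $C(\ZT)$; and combine via the Banach principle.

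The heart of the proof is the maximal inequality, for which I would proceed as follows. Starting from $|\Phi_r(y,f)|\le\int_\ZT\varphi_r^*(|y-t|)|f(t)|\,dt$, split the integration at $|t-x|=2\lambda(r)$. On the tail $|t-x|>2\lambda(r)$, the inequality $|y-t|\ge|t-x|/2$ (from $|y-x|\le\lambda(r)$) gives $\varphi_r^*(|y-t|)\le\varphi_r^*(|t-x|/2)$, a decreasing radial kernel in the variable $|t-x|$; the standard rearrangement estimate for convolution with such kernels bounds the tail by $\lesssim\|\varphi_r^*\|_1 Mf(x)\le C_\varphi\,Mf(x)$, with no dependence on $r$ or $\lambda$. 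On the head $|t-x|\le2\lambda(r)$, Hölder's inequality with exponents $(q,p)$, $q=p/(p-1)$, yields
\[
\int_{|t-x|\le2\lambda(r)}\varphi_r^*(|y-t|)|f(t)|\,dt\le\|\varphi_r^*\chi_{[0,3\lambda(r)]}\|_q\cdot(4\lambda(r))^{1/p}M^{(p)}f(x),
\]
where $M^{(p)}f=(M|f|^p)^{1/p}$. The two universal bounds $\varphi_r^*(u)\le\|\varphi_r\|_\infty$ and $\varphi_r^*(u)\le\FI(r)/u$, combined by splitting the integral defining $\|\varphi_r^*\chi_{[0,3\lambda(r)]}\|_q^q$ at the natural kernel scale $\FI(r)/\|\varphi_r\|_\infty$, give the local estimate $\|\varphi_r^*\chi_{[0,3\lambda(r)]}\|_q\lesssim\FI(r)^{1/q}\|\varphi_r\|_\infty^{1/p}$. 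The head is therefore bounded by $C\,\FI(r)^{1/q}(\lambda(r)\|\varphi_r\|_\infty)^{1/p}M^{(p)}f(x)$, and the $\PI_p$ hypothesis in the form $(\lambda(r)\|\varphi_r\|_\infty)^{1/p}\le C\,\FI(r)^{-1/q}$ (using $(p-1)/p=1/q$) cancels the $\FI(r)$ factor exactly, giving $\Phi^*(x,f)\lesssim M^{(p)}f(x)$. The weak-$(p,p)$ bound for $M^{(p)}$ then follows from the weak-$(1,1)$ bound for the Hardy--Littlewood maximal operator applied to $|f|^p$.

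For the dense-subclass step, take $f\in C(\ZT)$ and write $\Phi_r(y,f)-f(x)=\int_\ZT\varphi_r(y-t)(f(t)-f(x))\,dt+f(x)\bigl(\int_\ZT\varphi_r-1\bigr)$; the normalization axiom $\Phi1$ kills the second term, and splitting the first at $|y-t|<\delta$ and using uniform continuity of $f$ together with $\Phi2$ and the uniform $L^1$ bound $\|\varphi_r^*\|_1\le C_\varphi$ from $\Phi3$ yields $\Phi_r(y,f)\to f(x)$ uniformly over $y\in\lambda(r,x)$ and $x\in\ZT$. The Banach principle then completes the proof: given $\varepsilon,\eta>0$ and $f\in L^p$, pick $g\in C(\ZT)$ with $\|f-g\|_p<\varepsilon$, so that
\[
\limsup_{r\to1,\,y\in\lambda(r,x)}|\Phi_r(y,f)-f(x)|\le\Phi^*(x,f-g)+|f(x)-g(x)|;
\]
the weak-$(p,p)$ bound combined with Chebyshev's inequality shows this exceeds $\eta$ on a set of measure at most $C(\varepsilon/\eta)^p$, which vanishes as $\varepsilon\to0$.

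The technical heart of the argument is the cancellation between the $\FI(r)^{1/q}$ factor in the local $L^q$ kernel estimate and the $\FI(r)^{-1/q}$ factor produced by $\PI_p$: this balance is exactly what distinguishes the current $\PI_p$ criterion from the cruder $\PI(\lambda,\varphi)<\infty$ condition of \cite{KarSaf1}, and it is what permits wider tangential convergence regions when $p>1$.
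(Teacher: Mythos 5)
Your overall strategy (weak-type bound for the maximal operator, then density of $C(\ZT)$ and the Banach principle) coincides with the paper's, and your tail estimate via $|y-t|\ge|t-x|/2$ and the rearrangement bound is equivalent to the paper's Lemma~\ref{1.Phi*ineq1}. But your treatment of the \emph{local} part $|t-x|\le 2\lambda(r)$ is genuinely different and in fact simpler. The paper splits the domain into three regions using the intermediate scale $\mu(r)=\FI(r)/\|\varphi_r\|_\infty$ and handles the middle ring $\mu(r)<|t|<\lambda(r)$ with a dyadic decomposition and two dedicated lemmas (Lemmas~\ref{1.lemma-TAbound} and~\ref{1.lemma-supmulambda}), keeping careful track of $\FI(r)m_f(y,\cdot)$. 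You instead apply H\"older directly on the whole local piece and feed in the two pointwise bounds $\varphi_r^*(u)\le\|\varphi_r\|_\infty$ and $\varphi_r^*(u)\le\FI(r)/u$ to get the local $L^q$ estimate
\begin{equation*}
\|\varphi_r^*\chi_{[0,3\lambda(r)]}\|_q\ \lesssim\ \FI(r)^{1/q}\,\|\varphi_r\|_\infty^{1/p},
\end{equation*}
which, combined with $\lambda(r)\|\varphi_r\|_\infty\FI(r)^{p-1}\le C$, produces the cancellation $\FI(r)^{1/q}\cdot\FI(r)^{-1/q}$ and gives $\Phi_\lambda^*(x,f)\lesssim (M|f|^p(x))^{1/p}$ in one stroke. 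In effect you are proving that $\PI_p(\lambda,\varphi)<\infty$ implies the cruder bound $\limsup_{r\to1}\lambda(r)\|\varphi_r\|_q^p<\infty$, and then running the argument of the paper's Lemma~\ref{1.rough-bound}; the paper records this alternative route only in the Final Remarks (without spelling out the implication $\PI_p\Rightarrow$ \eqref{0.sup-rough-bound}), and for its main proof of Theorem~\ref{1.weakpp} prefers the three-region split, which gives explicit constants such as $\TPI_p^{1/p}$ and is re-used structurally in the converse Theorem~\ref{1.fatouLp-example}. Your computation of the local $L^q$ norm and the cancellation are correct; the only small housekeeping point is that you should also shrink $r_0$ so that $\lambda(r)<\pi/3$ on $(r_0,1)$, which follows from $\PI_p<\infty$ together with $\FI(r)\gtrsim 1/\log\|\varphi_r\|_\infty$ and $\|\varphi_r\|_\infty\to\infty$.
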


The proof of this theorem is established by standard methods using weak type inequality of the associated maximal operator $\Phi_{\lambda}^*$ defined as
\md0
\Phi_{\lambda}^*(x,f) = \sup_{\substack{|x-y|<\lambda(r)\\0<r<1}}|\Phi_r(y,f)|
= \sup_{\substack{|x-y|<\lambda(r)\\0<r<1}}\left|\int_\ZT \varphi_r(y-t)f(t)\,dt \right|.
\emd

\begin{definition}
For $f\in L^1(\ZT)$ denote by $Mf$ the Hardy-Littlewood maximal function defined as follows:
\begin{equation*}
Mf(x) = \sup_{0<t<\pi} \frac{1}{2t}\int_{x-t}^{x+t} |f(t)|\,dt, \quad x\in\ZT.
\end{equation*}
\end{definition}

\begin{theorem}\label{1.weakpp}
Let $\{\varphi_r\}$ be an arbitrary approximate identity and for some $1\le p < \infty$ the function $\lambda(r)$ satisfies $\TPI_p(\lambda, \varphi)<\infty$, where
\md0
\TPI_p(\lambda, \varphi) = \sup_{0<r<1} \lambda(r)\|\varphi_r\|_{\infty}\FI(r)^{p-1}.
\emd
Then for any $f\in L^p(\ZT)$
\md1\label{1.Phi*-bound}
\Phi_{\lambda}^*(x,f) \le C \left(M|f|^p(x)\right)^{1/p},\quad x\in\ZT,
\emd
where the constant $C$ does not depend on function $f$. In particular, the operator $\Phi_{\lambda}^*$ is of weak type $(p,p)$, i.e.
\md0
\left|\left\{x\in\ZT \colon \Phi_{\lambda}^*(x,f) > t\right\}\right| \le \frac{\tilde{C}}{t^p}\|f\|_p^p
\emd
holds for any $t>0$, where constant $\tilde{C}$ does not depend on function $f$ and $t$.
\end{theorem}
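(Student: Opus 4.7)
The plan is to establish the pointwise bound \e{1.Phi*-bound}; once this is in hand, the weak-type $(p,p)$ conclusion follows at once from the weak-type $(1,1)$ inequality for the Hardy--Littlewood maximal operator applied to $|f|^p$, since $\{\Phi_\lambda^*(\cdot,f)>t\}\subset\{M|f|^p>(t/C)^p\}$. Starting from the elementary domination
\md0
|\Phi_r(y,f)|\le\int_\ZT\varphi_r^*(|y-t|)\,|f(t)|\,dt,
\emd
fix $x\in\ZT$, $0<r<1$, and $y$ with $|x-y|<\lambda(r)$, and split the integration into a \emph{near} part on $\{|x-t|<2\lambda(r)\}$ and a \emph{far} part on $\{|x-t|\ge 2\lambda(r)\}$. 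On the far part the triangle inequality gives $|y-t|\ge|x-t|/2$, which allows one to recentre the estimate at $x$.

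For the far part a dyadic decomposition into the shells $A_k=\{2^k\lambda(r)\le|x-t|<2^{k+1}\lambda(r)\}$, $k\ge 1$, works cleanly: on $A_k$ monotonicity of $\varphi_r^*$ yields $\varphi_r^*(|y-t|)\le\varphi_r^*(2^{k-1}\lambda(r))$, and H\"older's inequality with exponents $p$ and $q=p/(p-1)$ extracts a $(M|f|^p(x))^{1/p}$ factor from each shell. The resulting series is controlled by $(M|f|^p(x))^{1/p}\sum_{k\ge 0}2^k\lambda(r)\varphi_r^*(2^k\lambda(r))$. The $k=0$ term is at most $\FI(r)$; for $k\ge 1$ the decreasing property of $\varphi_r^*$ gives $2^k\lambda(r)\varphi_r^*(2^k\lambda(r))\le 2\int_{2^{k-1}\lambda(r)}^{2^k\lambda(r)}\varphi_r^*(s)\,ds$, so the tail telescopes to at most $2\|\varphi_r^*\|_1$. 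Since $\FI(r)\le\|\varphi_r^*\|_1$ (again by monotonicity), the whole sum is $O(C_\varphi)$.

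For the near part H\"older's inequality gives
\md0
\int_{|x-t|<2\lambda(r)}\varphi_r^*(|y-t|)\,|f(t)|\,dt\le\Bigl(\int_0^{3\lambda(r)}\varphi_r^*(s)^q\,ds\Bigr)^{1/q}\bigl(4\lambda(r)\,M|f|^p(x)\bigr)^{1/p}.
\emd
The key ingredient is the sharp envelope $\varphi_r^*(s)\le\min(\|\varphi_r\|_\infty,\FI(r)/s)$, immediate from the definition of $\FI(r)$. Splitting the $L^q$ integral at the crossover point $s_0=\FI(r)/\|\varphi_r\|_\infty$ and computing each piece explicitly yields $\int_0^{3\lambda(r)}\varphi_r^*(s)^q\,ds\le C_p\,\|\varphi_r\|_\infty^{q-1}\FI(r)$. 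Substituting, and using the arithmetic identities $(q-1)/q=1/p$ and $p/q=p-1$, the near part is dominated by
\md0
C_p\bigl(\lambda(r)\|\varphi_r\|_\infty\bigr)^{1/p}\FI(r)^{1/q}\bigl(M|f|^p(x)\bigr)^{1/p}=C_p\bigl(\lambda(r)\|\varphi_r\|_\infty\FI(r)^{p-1}\bigr)^{1/p}\bigl(M|f|^p(x)\bigr)^{1/p}\le C_p\TPI_p^{1/p}\bigl(M|f|^p(x)\bigr)^{1/p}.
\emd

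The main obstacle is this near-part estimate: the naive radially-decreasing-convolution bound $\int\varphi_r^*(|y-t|)|f(t)|\,dt\le\|\varphi_r^*\|_1M|f|(y)$ is too crude, because $M|f|(y)$ is not pointwise comparable to $(M|f|^p(x))^{1/p}$ at scales below $|x-y|$. The H\"older argument with the sharp two-sided envelope on $\varphi_r^*$ is precisely what brings both $\|\varphi_r\|_\infty$ and $\FI(r)$ into the bound, and the identity $p/q=p-1$ is what lets the powers of $\FI(r)$ collect into exactly the weight $\FI(r)^{p-1}$ defining $\TPI_p$; this explains the somewhat unusual form of the hypothesis. (The case $p=1$ is simpler: one bypasses the $L^q$ computation by using $\|\varphi_r\|_\infty\cdot\int_{|x-t|<2\lambda(r)}|f|\le 4\TPI_1\,M|f|(x)$ directly.)
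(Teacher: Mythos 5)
Your proof is correct and reaches \e{1.Phi*-bound} by a genuinely shorter route than the paper. The paper splits $\Phi_r(y,f)$ into three pieces at $|t|=\mu(r)=\FI(r)/\|\varphi_r\|_\infty$ and $|t|=\lambda(r)$: the tail is handled by \lem{1.Phi*ineq1}, the inner core by $\|\varphi_r\|_\infty$, and the middle annulus by the operator $T_A$ of \lem{1.lemma-TAbound}, which relies on Jensen's inequality $m_f^p\le m_{f^p}$ together with the dyadic sum of \lem{1.lemma-supmulambda}. You collapse the core and the annulus into a single near-region H\"older estimate; the ingredient replacing \lem{1.lemma-TAbound} and \lem{1.lemma-supmulambda} is the envelope $\varphi_r^*(s)\le\min\left(\|\varphi_r\|_\infty,\,\FI(r)/s\right)$, which gives $\int_0^{3\lambda(r)}\left(\varphi_r^*\right)^q ds\le C_p\,\|\varphi_r\|_\infty^{q-1}\FI(r)$, and the identities $(q-1)/q=1/p$ and $p/q=p-1$ then produce precisely $\TPI_p^{1/p}\left(M|f|^p\right)^{1/p}$. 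In effect you have observed that $\|\varphi_r\|_q^p\le C_p\,\|\varphi_r\|_\infty\FI(r)^{p-1}$, so that $\TPI_p(\lambda,\varphi)<\infty$ already implies the hypothesis of the paper's own \lem{1.rough-bound}; combined with \lem{1.Phi*ineq1} this gives a two-lemma proof of \trm{1.weakpp} and makes explicit that $\TPI_p<\infty$ forces the Carlsson-type condition \e{0.sup-rough-bound}, a comparison the Final Remarks allude to but do not state. Your far-part shell sum is essentially a reproof of \lem{1.Phi*ineq1} in $x$-centered coordinates, and your $p=1$ remark is right. One detail worth recording: if the crossover $s_0=\FI(r)/\|\varphi_r\|_\infty$ exceeds $3\lambda(r)$, only the flat branch of the envelope contributes, but then $\int_0^{3\lambda(r)}\left(\varphi_r^*\right)^q\le 3\lambda(r)\|\varphi_r\|_\infty^q<3\FI(r)\|\varphi_r\|_\infty^{q-1}$, so the stated bound holds unconditionally and no separate normalization of $\lambda$ against $\mu$ (as at the start of the paper's proof) is needed.
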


The following theorem reveals significance of the condition $\PI_p(\lambda, \varphi) < \infty$ in \trm{1.fatouLp} with an additional constraint on kernels.

\begin{theorem}\label{1.fatouLp-example}
Let $\{\varphi_r\}$  be an arbitrary approximate identity with $c_\varphi>0$, where
\md1\label{1.extraCondition-phi}
c_\varphi = \liminf_{r\to1}\frac{1}{\FI(r)}\int_{-\mu(r)}^{\mu(r)}|\varphi_r(t)|\,dt, \quad \mu(r)=\frac{\FI(r)}{\|\varphi_r\|_\infty},
\emd
and $\lambda(r)$ satisfies the condition $\PI_p(\lambda, \varphi)=\infty$ for some $p,\,1<p<\infty$. Then there exists a function $f\in L^p(\ZT)$ such that
\md1\label{1.Lp-divergence}
\limsup_{\stackrel{r\to 1}{y\in\lambda(r,x)}} \Phi_r(y,f) = \infty,
\emd
for all $x\in\ZT$.
\end{theorem}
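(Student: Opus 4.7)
I would construct a counterexample $f\in L^p(\ZT)$ explicitly as a lacunary sum $f=\sum_k f_k$, where each component $f_k$ is engineered so that, at scale $r_k$, the integral $\Phi_{r_k}(\cdot,f)$ is forced to be large on $\lambda(r_k,x)$, uniformly for $x\in\ZT$.

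Using $\PI_p(\lambda,\varphi)=\infty$, first extract a lacunary sequence $r_k\nearrow 1$ along which $M_k\deff \lambda(r_k)\|\varphi_{r_k}\|_\infty\FI(r_k)^{p-1}$ grows as rapidly as needed, say $M_k\ge 4^{kp}$. Set $\mu_k\deff \FI(r_k)/\|\varphi_{r_k}\|_\infty$, the natural ``width'' of $\varphi_{r_k}$ implicit in the definition of $c_\varphi$. Distribute $N_k\asymp 1/\lambda(r_k)$ equispaced centres $y_{k,j}\in\ZT$ so that every $x\in\ZT$ lies within $\lambda(r_k)$ of some $y_{k,j(k,x)}$, and define
\md0
f_k(t) \deff a_k \sum_{j=1}^{N_k}\sgn\bigl(\varphi_{r_k}(y_{k,j}-t)\bigr)\,\chi_{[y_{k,j}-\mu_k,\,y_{k,j}+\mu_k]}(t),
\emd
where $a_k>0$ is to be calibrated.

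The crucial computation is at $y=y_{k,j_0}$. After the substitution $s=y_{k,j_0}-t$, the \emph{diagonal} contribution (from $j=j_0$) equals $a_k\int_{-\mu_k}^{\mu_k}|\varphi_{r_k}(s)|\,ds$, which by the hypothesis $c_\varphi>0$ is at least $(c_\varphi/2)a_k\FI(r_k)$ for $k$ large. The \emph{off-diagonal} contributions ($j\ne j_0$) are estimated by $|\varphi_{r_k}(y_{k,j_0}-t)|\le\varphi_{r_k}^*(|y_{k,j_0}-t|)$ combined with $\|\varphi_{r_k}^*\|_1\le C_\varphi$ (condition $\Phi 3$), the pointwise bound $\varphi_{r_k}^*(s)\le\FI(r_k)/s$, the decay in $\Phi 2$, and the spacing $\asymp\lambda(r_k)$ of the disjoint bump supports. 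Since $\|f_k\|_p^p\asymp a_k^p\mu_k/\lambda(r_k)$, these estimates yield
\md0
\Phi_{r_k}(y_{k,j_0},f_k)\ \gtrsim\ \|f_k\|_p\cdot M_k^{1/p}.
\emd

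Calibrate $a_k$ so that $\|f_k\|_p=2^{-k}$ and set $f=\sum_k f_k\in L^p(\ZT)$. For any $x\in\ZT$, taking $y=y_{k,j(k,x)}\in\lambda(r_k,x)$, the diagonal contribution at scale $k$ is $\gtrsim 2^{-k}M_k^{1/p}\gtrsim 2^k\to\infty$; \eqref{1.Lp-divergence} follows once the cross-scale remainder $\sum_{l\ne k}|\Phi_{r_k}(y,f_l)|$ is dominated. For coarser scales $l<k$, $f_l$ is (relatively) smooth at scale $r_k$ and $\Phi_{r_k}*f_l\approx f_l$ by $\Phi 1$--$\Phi 2$; for finer scales $l>k$, the $L^1$-norms $\|f_l\|_1$ are exponentially small so the crude bound $|\Phi_{r_k}(\cdot,f_l)|\le\|\varphi_{r_k}\|_\infty\|f_l\|_1$ suffices. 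Sufficiently fast lacunarity of $\{r_k\}$ (depending on the growth of $\|\varphi_{r_k}\|_\infty$ and $\FI(r_k)$) makes these tails summable and negligible compared with $2^k$. The main technical obstacle is precisely this final bookkeeping: one must juggle the three scales $\mu_k$, $\lambda(r_k)$ and the gaps between consecutive $r_k$ so that neither the within-scale neighbour bumps nor the spurious cross-scale contributions swamp the diagonal blow-up. The assumption $c_\varphi>0$ is used essentially to secure the diagonal term, while $\Phi 2$--$\Phi 3$ are used to tame everything else.
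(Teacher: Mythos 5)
Your proposal follows essentially the same strategy as the paper's proof: choose a rapidly growing lacunary sequence $r_k\nearrow1$ along which $\lambda(r_k)\|\varphi_{r_k}\|_\infty\FI^{p-1}(r_k)\to\infty$; build $f_{r_k}$ as a union of $\asymp 1/\lambda(r_k)$ equispaced bumps of half--width $\mu(r_k)=\FI(r_k)/\|\varphi_{r_k}\|_\infty$, sign-aligned with the kernel so that the hypothesis $c_\varphi>0$ secures the diagonal lower bound $\Phi_{r_k}(y_{k,j_0},f_{r_k})\gtrsim a_k\FI(r_k)$; calibrate $\|f_{r_k}\|_p\lesssim 2^{-k}$; set $f=\sum_k f_{r_k}$; and control the cross-scale interactions by inductive (lacunary) choice of $r_k$. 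That said, there are two small points worth flagging. First, in the paper the sign factor is chosen \emph{globally} (so that the integrand $\varphi_r(2\pi k_0/n(r)-t)f_r(t)$ is everywhere nonnegative and the off-diagonal bumps can simply be dropped in the lower bound); your per-bump sign $\sgn\varphi_{r_k}(y_{k,j}-t)$ is a natural choice, but it makes the off-diagonal contributions at scale $k$ possibly negative, and you do need the estimate (which you sketch correctly via $\varphi^*_{r_k}(s)\le\FI(r_k)/s$ and $\|\varphi^*_{r_k}\|_1\le C_\varphi$) showing that these sum to $\lesssim a_k\mu_k C_\varphi/\lambda(r_k)$, which is $o(a_k\FI(r_k))$ since $\mu_k/\lambda(r_k)=\FI^p(r_k)/M_k\to 0$. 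Second, for the coarser scales $l<k$ the heuristic "$\Phi_{r_k}*f_l\approx f_l$ by $\Phi1$--$\Phi2$'' is not quite right since each $f_l$ is a highly oscillatory step function with large $L^\infty$-norm $a_l$; the bound actually used (and what you implicitly need) is the crude $|\Phi_{r_k}(\cdot,f_l)|\le\|f_l\|_\infty\|\varphi^*_{r_k}\|_1\le C_\varphi a_l$, which is then dominated by choosing $M_k$ inductively so that $2^{-k}M_k^{1/p}\gg\sum_{l<k}a_l$ — essentially the paper's condition \eqref{1.Lambda-bound}. For the finer scales $l>k$ your direct bound $\|\varphi_{r_k}\|_\infty\|f_l\|_1$ with inductive smallness of $\|f_l\|_1$ is a valid alternative to the paper's route via the equidistribution Lemma \ref{1.BV-means} and condition \eqref{1.phi-bound-uniform}; both work, and yours is arguably more elementary at the cost of an extra size condition in the induction.
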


As we will see in \lem{1.phi-r-bound}, the function $\FI(r)$ satisfies
\md1\label{0.phi-r-bound-ineqs}
\frac{c}{\log\|\varphi_r\|_\infty} \le \FI(r) \le C_\varphi, \quad r_0<r<1,
\emd
where $c$ is a positive absolute constant. First of all, both bounds in \e{0.phi-r-bound-ineqs} are accessible. For instance, if we take the Poisson kernel $P_r(t)$ then it can be checked that $P_*(r)\asymp1$. On the other hand, if we take the square root Poisson kernel $P^{(1/2)}_r(t)$, then one can show that
\md1\label{0.square-root-poisson}
P^{(1/2)}_*(r) \asymp \left(\log\frac{1}{1-r}\right)^{-1} \asymp \frac{1}{\log\|P^{(1/2)}_r\|_\infty}.
\emd

From the first inequality of \e{0.phi-r-bound-ineqs} it follows that the condition $\PI_p(\lambda,\varphi)<\infty$ cannot be weaker (in other words the associated region of convergence in the unit disk cannot be larger) than
\md0
\limsup_{r\to 1}\lambda(r) \|\varphi_r\|_\infty \left(\frac{1}{\log\|\varphi_r\|_\infty}\right)^{p-1}<\infty,
\emd
which again depends only on values $\|\varphi_r\|_\infty$. The second inequality of \e{0.phi-r-bound-ineqs} ensures that the multiplier $\FI(r)$ in condition $\PI_p(\lambda,\varphi)<\infty$ can only weaken that condition (in other words can only enlarge the associated region of convergence in the unit disk) if we increase $p$, i.e. condition $\PI_{p_1}<\infty$ imples $\PI_{p_2}<\infty$, whenever $1\le p_1 \le p_2 < \infty$.

Taking into account \e{sqrootP-infnorm} and \e{0.square-root-poisson}, note that these results imply \e{0.frac-poisson} when $1<p<\infty$ as well as \otrm{OldRonning}. Besides, for the Poisson kernel or for the square root Poisson kernel we have $c_\varphi=1>0$, and from \trm{1.fatouLp-example} we conclude the optimality of the bound \e{0.theta-bound-P0.5} for $1<p<\infty$.

\section{Auxiliary Lemmas}\label{fatou-lemmas}
We will use the following lemma in the proof of \trm{1.fatouLp-example}.
\begin{lemma}\label{1.BV-means}
Let $\varphi\in\BV(\ZT)$ be a function of bounded variation and
\md0
\Delta_k = \bigcup_{j=0}^{n_k-1}\left[\frac{2\pi j}{n_k}-\delta_k,\frac{2\pi j}{n_k}+\delta_k\right] \subset\ZT,
\emd
where $n_k\in\ZN, \delta_k\in\ZT$ such that $n_k\to\infty$ as $k\to\infty$ and $0<\delta_k<\frac{\pi}{n_k},\,k=1,2,\dots$. Then
\md0
\lim_{k\to\infty} \frac{1}{|\Delta_k|}\int_{\Delta_k} \varphi(\theta+t)\,dt = \frac{1}{2\pi}\int_\ZT \varphi(t)\,dt,
\emd
where the convergence is uniform with respect to $\theta\in\ZT$.
\end{lemma}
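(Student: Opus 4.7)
The plan is to compare the two averages by chopping both domains of integration along a common grid aligned with the centers of the small intervals. Specifically, for each $k$ set
\md0
E_{k,j} = \left[\frac{2\pi j}{n_k} - \frac{\pi}{n_k},\,\frac{2\pi j}{n_k} + \frac{\pi}{n_k}\right],\qquad j=0,1,\dots,n_k-1.
\emd
The intervals $E_{k,j}$ form a partition of $\ZT$ into $n_k$ equal pieces of length $2\pi/n_k$, and since $\delta_k<\pi/n_k$ the small interval $I_{k,j}=[2\pi j/n_k-\delta_k,\,2\pi j/n_k+\delta_k]$ making up the $j$-th component of $\Delta_k$ is contained in $E_{k,j}$. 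In these terms, after dividing out $|\Delta_k|=2n_k\delta_k$, one has
\md0
\frac{1}{|\Delta_k|}\int_{\Delta_k}\varphi(\theta+t)\,dt=\frac{1}{n_k}\sum_{j=0}^{n_k-1} b_j(\theta),\qquad \frac{1}{2\pi}\int_\ZT\varphi(t)\,dt=\frac{1}{n_k}\sum_{j=0}^{n_k-1} c_j(\theta),
\emd
where $b_j(\theta)$ is the mean value of $\varphi(\theta+\cdot)$ over $I_{k,j}$ and $c_j(\theta)$ the mean value of $\varphi(\theta+\cdot)$ over $E_{k,j}$.

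The key step is to control $|b_j(\theta)-c_j(\theta)|$. Because both are averages of the same function over subsets of $E_{k,j}$, each of them lies between $\inf\varphi(\theta+\cdot)$ and $\sup\varphi(\theta+\cdot)$ taken on $E_{k,j}$, so
\md0
|b_j(\theta)-c_j(\theta)|\le \omega_j(\theta),
\emd
where $\omega_j(\theta)$ is the oscillation of $\varphi$ on the translated interval $\theta+E_{k,j}$. For any fixed $\theta$ the translated family $\{\theta+E_{k,j}\}_j$ is again a partition of $\ZT$ into non-overlapping intervals, so the additivity of variation for a $\BV$ function gives $\sum_{j=0}^{n_k-1}\omega_j(\theta)\le\V{\varphi;\ZT}$, \emph{uniformly} in $\theta$.

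Combining the previous displays,
\md0
\left|\frac{1}{|\Delta_k|}\int_{\Delta_k}\varphi(\theta+t)\,dt-\frac{1}{2\pi}\int_\ZT\varphi(t)\,dt\right|\le\frac{1}{n_k}\sum_{j=0}^{n_k-1}\omega_j(\theta)\le\frac{\V{\varphi;\ZT}}{n_k},
\emd
and the right-hand side tends to $0$ as $k\to\infty$ independently of $\theta$, which is the claim. There is no real obstacle here: the only care needed is to align the partition with the grid $\{2\pi j/n_k\}$, and to observe that the $\delta_k<\pi/n_k$ hypothesis is exactly what guarantees $I_{k,j}\subset E_{k,j}$ so that the oscillation argument applies piece by piece.
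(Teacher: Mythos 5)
Your proof is correct, and it takes a mildly different route than the paper's. The paper inserts the discrete Riemann sum $\frac{1}{n_k}\sum_{j}\varphi\bigl(\theta+\tfrac{2\pi j}{n_k}\bigr)$ as an intermediate quantity and applies the triangle inequality twice, bounding each leg by a sum of variations over a tiling of $\ZT$ by $n_k$ intervals; this yields the bound $\frac{2}{n_k}\V{\varphi,\ZT}$. You instead make the cleaner choice of a partition $\{E_{k,j}\}$ \emph{centered} at the grid points, so that the containment $I_{k,j}\subset E_{k,j}$ lets you compare the two block averages directly: both lie in the closed interval between the infimum and supremum of $\varphi(\theta+\cdot)$ on $E_{k,j}$, so their difference is controlled by the oscillation, which in turn is bounded by the variation on that piece. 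Summing and using additivity of variation gives $\frac{1}{n_k}\V{\varphi,\ZT}$, avoiding the intermediate Riemann sum and improving the constant by a factor of two. Both arguments hinge on the same two facts — the $\delta_k<\pi/n_k$ hypothesis forces disjointness/containment, and total variation is additive over a partition of the circle — so the mechanism is the same; your version is just slightly more economical.
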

\begin{proof}
Denote by $\Delta_k^j$ the $j$th component interval of $\Delta_k$ such that $\Delta_k=\cup_{0\le j<n_k}\Delta_k^j$. Condition $0<\delta_k<\frac{\pi}{n_k}$ implies that component intervals are pairwise disjoint and $|\Delta_k^j|=2\delta_k$. Let $\theta+\Delta_k^j = \{\theta+t\colon t\in\Delta_k^j\}$ and $\V{\varphi, [a,b]}$ be the total variation of function $\varphi$ on an interval $[a,b]\subset\ZT$. Then
\md9
&\left|\frac{1}{|\Delta_k|}\int_{\Delta_k} \varphi(\theta+t)\,dt - \frac{1}{2\pi}\int_\ZT \varphi(t)\,dt\right|\\
&\mspace{100mu}\le\left|\frac{1}{n_k}\sum_{j=0}^{n_k-1}\frac{1}{2\delta_k}\int_{\Delta_k^j}\varphi(\theta+t)\,dt
- \frac{1}{n_k}\sum_{j=0}^{n_k-1}\varphi\left(\theta+\frac{2\pi j}{n_k}\right)\right|\\
&\mspace{100mu} + \left|\frac{1}{n_k}\sum_{j=0}^{n_k-1}\varphi\left(\theta+\frac{2\pi j}{n_k}\right)
         - \frac{1}{2\pi}\int_\ZT \varphi(\theta+t)\,dt\right|\\
&\mspace{100mu}\le \frac{1}{n_k}\sum_{j=0}^{n_k-1}\frac{1}{2\delta_k}\int_{\Delta_k^j}\left|\varphi(\theta+t)-\varphi\left(\theta+\frac{2\pi j}{n_k}\right)\right|\,dt\\
&\mspace{100mu}+ \frac{1}{2\pi}\sum_{j=0}^{n_k-1}\int_{2\pi j/n_k}^{2\pi(j+1)/n_k}\left|\varphi(\theta+t)-\varphi\left(\theta+\frac{2\pi j}{n_k}\right)\right|\,dt\\
&\mspace{100mu}\le \frac{1}{n_k}\sum_{j=0}^{n_k-1}\V{\varphi, \theta+\Delta_k^j}
+ \frac{1}{n_k}\sum_{j=0}^{n_k-1} \V{\varphi, \left[\theta+\frac{2\pi j}{n_k}, \theta+\frac{2\pi(j+1)}{n_k}\right]}\\
&\mspace{100mu}\le \frac{2}{n_k}\V{\varphi, \ZT}.
\emd
The last term does not depend on $\theta$ and vanishes as $k\to\infty$, which completes the proof of the lemma.
\end{proof}

The next 3 lemmas are key ingredients of the proof of \trm{1.weakpp}.
\begin{lemma}\label{1.Phi*ineq1}
Let $\{\varphi_r\}$ be an arbitrary approximate identity and $\lambda(r)>0$ be any function. Then for any function $f\in L^1(\ZT)$
\md0
\sup_{\substack{|x-y|<\lambda(r)\\0<r<1}}\left|\int_{\lambda(r)\le|t|\le\pi} \varphi_r(t)f(y-t)\,dt \right| \le 8 C_\varphi\cdot Mf(x),\quad x\in\ZT.
\emd
\end{lemma}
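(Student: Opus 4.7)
The plan is to replace $|\varphi_r|$ on the annulus $\lambda(r)\le|t|\le\pi$ by its radial non-increasing majorant $\varphi_r^*$ and then control the resulting convolution-type integral by the Hardy--Littlewood maximal function through layer-cake slicing. By the very definition $\varphi_r^*(t)=\sup_{|t|\le|s|\le\pi}|\varphi_r(s)|\ge|\varphi_r(t)|$, so taking absolute values under the integral and substituting $s=y-t$ dominates the quantity under the supremum by
\md0
I(r,y)\deff \int_{\lambda(r)\le|y-s|\le\pi}\varphi_r^*(y-s)|f(s)|\,ds.
\emd

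Because $\varphi_r^*$ is non-negative and non-increasing in $|\cdot|$, every super-level set has the form $\{w\colon\varphi_r^*(w)>u\}=(-\rho(u),\rho(u))$ with $\rho(u)\in[0,\pi]$. Cavalieri's formula in the variable $u$ yields
\md0
I(r,y)=\int_0^\infty\left(\int_{\lambda(r)\le|y-s|<\rho(u)}|f(s)|\,ds\right)du,
\emd
and the inner integral vanishes unless $\rho(u)>\lambda(r)$. The hypothesis $|x-y|<\lambda(r)$ now enters in the decisive step: when $\rho(u)>\lambda(r)$, the containment $\{s\colon|y-s|<\rho(u)\}\subset\{s\colon|x-s|<\rho(u)+\lambda(r)\}\subset\{s\colon|x-s|<2\rho(u)\}$ lets one re-centre the ball at $x$ with only a factor-two loss, after which the defining property of $Mf$ gives
\md0
\int_{|y-s|<\rho(u)}|f(s)|\,ds\le\int_{|x-s|<2\rho(u)}|f(s)|\,ds\le 4\rho(u)\cdot Mf(x),
\emd
with a routine check when $2\rho(u)$ exceeds $\pi$ using $\int_\ZT|f|\le 2\pi Mf(x)$.

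Integrating in $u$ and using the layer-cake identity $\int_0^\infty 2\rho(u)\,du=\|\varphi_r^*\|_1$ together with condition $\Phi3$, i.e. $\|\varphi_r^*\|_1\le C_\varphi$, yields $I(r,y)\le 8C_\varphi\cdot Mf(x)$ uniformly in $y$ with $|x-y|<\lambda(r)$ and in $r\in(0,1)$; the factor $8$ is a comfortable upper bound for the accumulated absolute constants. I do not anticipate any essential difficulty: the only non-routine point is recognising that the cut-off $|t|\ge\lambda(r)$ in the integrand matches the off-centre distance $|x-y|<\lambda(r)$ precisely well enough to let every ball around $y$ that appears after slicing be replaced by a ball around $x$ of at most twice the radius, which is exactly what makes $Mf(x)$ (rather than $Mf(y)$) the correct controlling quantity.
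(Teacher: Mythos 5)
Your argument is correct, and it reaches the conclusion by a genuinely different decomposition than the paper's. The paper's proof splits the annulus $\lambda(r)\le|t|\le\pi$ into dyadic shells $[2^{k-1}\lambda(r),2^k\lambda(r)]$, bounds $\varphi_r$ on each shell by $\varphi_r^*$ at the inner endpoint, re-centres each resulting average from $y$ to $x$, and then resums the geometric series back into $\int_0^\pi\varphi_r^*$. You instead replace $|\varphi_r|$ by $\varphi_r^*$ up front, slice in the vertical direction via Cavalieri so that the superlevel sets $\{\varphi_r^*>u\}=(-\rho(u),\rho(u))$ appear, exploit the cut-off $|t|\ge\lambda(r)$ to discard slices with $\rho(u)\le\lambda(r)$, and re-centre each surviving interval from $y$ to $x$ at a factor-two cost before applying the layer-cake identity $\int_0^\infty 2\rho(u)\,du=\|\varphi_r^*\|_1$. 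The two decompositions are dual to one another (horizontal dyadic shells versus vertical level slices), and both isolate the same two structural facts: $\varphi_r^*$ is even and non-increasing in $|t|$, and the condition $|x-y|<\lambda(r)$ is exactly what allows balls centred at $y$ in the region $|t|\ge\lambda(r)$ to be replaced by balls centred at $x$ of comparable size. Your route is slightly cleaner in bookkeeping and in fact yields the sharper constant $2C_\varphi$ (since $\int_0^\infty 4\rho(u)\,du=2\|\varphi_r^*\|_1\le 2C_\varphi$), whereas the paper's shell-by-shell estimate accumulates to $8C_\varphi$; you correctly note that $8C_\varphi$ remains a valid, if generous, bound.
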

\begin{proof}
Without loss of generality we may assume that $f$ is non-negative. Let $x, y\in\ZT,\,0<r<1$ such that $|x-y|<\lambda(r)$. We devide the interval $[\lambda(r),\pi]$ into $[2^{k-1}\lambda(r), 2^k\lambda(r)], k=1,2,\dots,Q=\lceil\log\frac{\pi}{\lambda(r)}\rceil$ and estimate the values of $\varphi_r(t)$ by its maximum in each divided interval:
\md2
\left|\int_{\lambda(r)}^{\pi} \varphi_r(t)f(y-t)\,dt\right|
&\le \sum_{k=1}^Q \int_{2^{k-1}\lambda(r)}^{2^k\lambda(r)} \varphi^*_r(t)f(y-t)\,dt\\
&\le \sum_{k=1}^Q \varphi^*_r\left(2^{k-1}\lambda(r)\right)\int_{2^{k-1}\lambda(r)}^{2^k\lambda(r)} f(y-t)\,dt\\
&\le \sum_{k=1}^Q \varphi^*_r\left(2^{k-1}\lambda(r)\right)\int_{\lambda(r)}^{2^k\lambda(r)} f(y-t)\,dt
\emd
Since $|x-y|<\lambda(r)$ we have
\md0
\int_{\lambda(r)}^{2^k\lambda(r)} f(y-t)\,dt \le \int_{0}^{(1+2^k)\lambda(r)} f(x-t)\,dt.
\emd
Therefore
\md2
\left|\int_{\lambda(r)}^{\pi} \varphi_r(t)f(y-t)\,dt\right|
&\le \sum_{k=1}^Q \varphi^*_r\left(2^{k-1}\lambda(r)\right)\int_{0}^{(1+2^k)\lambda(r)} f(x-t)\,dt\\
&\le Mf(x)\cdot \sum_{k=1}^Q \varphi^*_r\left(2^{k-1}\lambda(r)\right)(1+2^k)\lambda(r)\\
&\le 8 Mf(x)\cdot \sum_{k=0}^{Q-1} \varphi^*_r\left(2^k\lambda(r)\right)2^{k-1}\lambda(r)\\
&\le 8 Mf(x) \cdot \int_0^\pi \varphi^*_r(t)\,dt,
\emd
where in the last inequality we have used the following simple geometric inequlaity:
\md8
\varphi^*_r\left(\lambda(r)\right)\lambda(r) &+ \sum_{k=1}^{Q-1} \varphi^*_r\left(2^k\lambda(r)\right) 2^{k-1}\lambda(r)\\
&\le \int_0^{\lambda(r)}\varphi^*_r(t)\,dt + \sum_{k=1}^{Q-1}\int_{2^{k-1}\lambda(r)}^{2^k\lambda(r)}\varphi^*_r(t)\,dt \\
&\le \int_0^\pi \varphi^*_r(t)\,dt.
\emd
Thus we have
\md0
\left|\int_{\lambda(r)}^{\pi} \varphi_r(t)f(y-t)\,dt\right| \le 8 Mf(x)\cdot\int_0^\pi \varphi^*_r(t)\,dt.
\emd
In the same way we get
\md0
\left|\int_{-\pi}^{-\lambda(r)} \varphi_r(t)f(y-t)\,dt\right| \le 8 Mf(x)\cdot\int_{-\pi}^0 \varphi^*_r(t)\,dt.
\emd
Therefore
\md0
\sup_{\substack{|x-y|<\lambda(r)\\0<r<1}} \left|\int_{\lambda(r)\le|t|\le\pi} \varphi_r(t)f(y-t)\,dt\right|
\le 8 Mf(x)\cdot\sup_{0<r<1}\|\varphi^*_r\|_1 \le 8C_\varphi\cdot Mf(x).
\emd
\end{proof}

\begin{lemma}\label{1.lemma-TAbound}
Let $\{\varphi_r\}$ be an arbitrary approximate identity and $\mu(r),\,\lambda(r)$ are some functions with
\begin{itemize}
\item[\textit{1.}] $0<\mu(r)\le\lambda(r)\le\pi$,
\item[\textit{2.}] $\lambda(r) \le C\mu(r)\FI^{-p}(r)$, for some $C>0$ and $p\ge 1$.
\end{itemize}
Then for any $A\ge1$ and for any function $f\in L^p(\ZT)$
\md0
T_A f(x) \le \left(C\cdot\frac{M|f|^p(x)}{A}\right)^{1/p},\quad x\in\ZT,
\emd
where
\md4
T_A f(x) = \sup_{\substack{A\mu(r)<|x-y|<\lambda(r)\\0<r<1}}\FI(r)m_f(y,A\mu(r)),\\
m_f(y,t) = \frac{1}{2t}\int_{y-t}^{y+t}|f(u)|\,du.\nonumber
\emd
\end{lemma}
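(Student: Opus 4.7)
\textbf{Proof proposal for \lem{1.lemma-TAbound}.}
The plan is to control $m_f(y,A\mu(r))$ by the Hardy--Littlewood maximal function of $|f|^p$ at the base point $x$, via three moves: pass to the $|f|^p$-average by Hölder, enlarge the averaging interval from one centered at $y$ to one centered at $x$, and then absorb the $\FI(r)$ factor using hypothesis 2 together with the constraint $A\mu(r)<|x-y|$.

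First observe that if $A\mu(r)\ge\lambda(r)$, the supremum defining $T_Af(x)$ is taken over an empty set of pairs $(r,y)$, and there is nothing to prove. So fix $r,y$ with $A\mu(r)<|x-y|<\lambda(r)$, which in particular forces $A\mu(r)<\lambda(r)$. By Jensen's inequality (using $p\ge1$),
\md0
m_f(y,A\mu(r))\le\bigl(m_{|f|^p}(y,A\mu(r))\bigr)^{1/p}.
\emd
Since $|x-y|<\lambda(r)$, the interval $[y-A\mu(r),y+A\mu(r)]$ is contained in $[x-(\lambda(r)+A\mu(r)),x+(\lambda(r)+A\mu(r))]$, so
\md0
m_{|f|^p}(y,A\mu(r))\le\frac{\lambda(r)+A\mu(r)}{A\mu(r)}\,M|f|^p(x)=\left(1+\frac{\lambda(r)}{A\mu(r)}\right)M|f|^p(x).
\emd

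Next, hypothesis 2 gives $\lambda(r)/(A\mu(r))\le C/(A\FI^p(r))$, and multiplying through by $\FI^p(r)$ yields
\md0
\FI^p(r)\,m_{|f|^p}(y,A\mu(r))\le\left(\FI^p(r)+\frac{C}{A}\right)M|f|^p(x).
\emd
The key observation is now to notice that the relation $A\mu(r)<\lambda(r)$ combined with hypothesis 2 (in the form $\lambda(r)\FI^p(r)\le C\mu(r)$) forces
\md0
\FI^p(r)<\frac{C}{A},
\emd
so that $\FI^p(r)+C/A<2C/A$. Taking $p$-th roots and passing to the supremum over admissible $(r,y)$ yields the desired bound with constant $2^{1/p}C^{1/p}$ in place of $C^{1/p}$.

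The main (and essentially only) subtlety is the last step: a naive bound would leave an unwanted $\FI^p(r)$ term which is not a priori $O(1/A)$ since the upper bound $\FI(r)\le C_\varphi$ from \e{0.phi-r-bound-ineqs} is too weak. The trick is to notice that the very range of $y$ over which the supremum in $T_A$ is taken rules out those $r$ with $\FI^p(r)\ge C/A$, which is exactly what allows the factor $\FI(r)$ on the left-hand side to combine with $1/A$ on the right.
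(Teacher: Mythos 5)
Your proof is correct and takes a genuinely different, arguably cleaner route than the paper's. The paper decomposes the supremum into dyadic annuli $2^{k-1}A\mu(r)<|x-y|\le 2^kA\mu(r)$ with the side constraint $2^kA\mu(r)\le\lambda(r)$; in the $k$-th shell it uses hypothesis \textit{2} to get $\FI^p(r)\le C(2^kA)^{-1}$ and the geometry to get $m_{f^p}(y,A\mu(r))\le 2^k M f^p(x)$, and the two factors of $2^k$ cancel exactly shell by shell, which is how the paper arrives at the constant $C^{1/p}$ with no slack. You instead bound $m_{|f|^p}(y,A\mu(r))$ once and for all by the worst-case factor $1+\lambda(r)/(A\mu(r))$, and then observe that hypothesis \textit{2} together with the admissibility constraint $A\mu(r)<|x-y|<\lambda(r)$ simultaneously controls both terms: $\FI^p(r)\cdot\lambda(r)/(A\mu(r))\le C/A$ by \textit{2}, and $\FI^p(r)<C/A$ directly because $A\mu(r)<\lambda(r)\le C\mu(r)\FI^{-p}(r)$. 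This algebraic shortcut replaces the dyadic layering entirely. The only cost is the factor $2^{1/p}$ you note explicitly; since this lemma is used in \lem{1.lemma-supmulambda} only through the estimate $T_Af(x)\lesssim (M f^p(x)/A)^{1/p}$ followed by summing a geometric series, the constant is immaterial and the downstream arguments go through unchanged. One small stylistic point: your enlargement gives the interval centered at $x$ of radius $\lambda(r)+A\mu(r)$, and that radius must be at most $\pi$ for the definition of $Mf$ as stated; this is immediate from hypothesis \textit{1} after possibly truncating, and the paper's proof has the same implicit step, so it is not a gap.
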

\begin{proof}
Without loss of generality we may assume that $f$ is non-negative. Using the definition of $T_A$ and Jensen\a s inequality we get
\md2
T_A^p f(x)
&= \sup_{\substack{A\mu(r)<|x-y|<\lambda(r)\\0<r<1}}\FI^p(r) m_f^p(y, A\mu(r))\\
&\le \sup_{\substack{A\mu(r)<|x-y|<\lambda(r)\\0<r<1}}\FI^p(r) m_{f^p}(y, A\mu(r))\\
&= \sup_{k\in\ZN} \sup_{\substack{2^{k-1}A\mu(r)<|x-y|\le 2^k A\mu(r)\\2^k A\mu(r)\le\lambda(r)\\0<r<1}} \FI^p(r) m_{f^p}(y, A\mu(r))
\emd
To estimate the inner supremum, first note that $2^k A\mu(r)\le\lambda(r)\le C\mu(r)\FI(r)^{-p}$ imples $\FI^p(r)\le C\left(2^k A\right)^{-1}$, where $C$ is the constant from condition $\mathit{2}$. Furthermore, since $2^{k-1}A\mu(r)<|x-y|\le2^k A\mu(r)$ we have
\md2
m_{f^p}(y,A\mu(r))
&= \frac{1}{2A\mu(r)} \int_{y-A\mu(r)}^{y+A\mu(r)}f^p(u)\,du\\
&\le \frac{1}{2A\mu(r)} \int_{x}^{x+(1+2^k)A\mu(r)}f^p(u)\,du\\
&\le \frac{(1+2^k)A\mu(r)}{2A\mu(r)} Mf^p(x) \le 2^k Mf^p(x).
\emd
Therefore
\md0
T_A^p f(x) \le \sup_{k\in\ZN} C\left(2^k A\right)^{-1}2^k Mf^p(x) = C\cdot\frac{Mf^p(x)}{A}.
\emd
\end{proof}

\begin{lemma}\label{1.lemma-supmulambda}
Let $\{\varphi_r\}$ be an arbitrary approximate identity and $\mu(r), \lambda(r)$ are some functions satisfying the conditions \textit{1.} and \textit{2.} from \lem{1.lemma-TAbound}. Then for any function $f\in L^p(\ZT)$
\md1\label{1.sup-mulambda}
\sup_{\substack{|x-y|<\lambda(r)\\0<r<1}}\left|\int_{\mu(r)\le|t|\le\lambda(r)} \varphi_r(t)f(y-t)\,dt \right|
\le \frac{4C^{1/p}}{2^{1/p}-1} \left(M|f|^p(x)\right)^{1/p},\quad x\in\ZT.
\emd
\end{lemma}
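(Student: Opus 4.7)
The plan is to establish the estimate by a dyadic decomposition of the annulus $\{t\in\ZT\colon\mu(r)\le|t|\le\lambda(r)\}$ and a pointwise application of \lem{1.lemma-TAbound} on each dyadic shell; assume without loss of generality that $f\ge 0$.

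Partition the annulus into shells $I_k=\{t\in\ZT\colon 2^{k-1}\mu(r)\le|t|\le 2^k\mu(r)\}$ for $k=1,2,\ldots$, truncating the last one at $|t|=\lambda(r)$. On each shell the monotonicity of $\varphi_r^*$ combined with the definition $\FI(r)=\sup_{x\in\ZT}|x\varphi_r^*(x)|$ gives $|\varphi_r(t)|\le\varphi_r^*(2^{k-1}\mu(r))\le \FI(r)/(2^{k-1}\mu(r))$. Absorbing the length of $[-2^k\mu(r),2^k\mu(r)]$ into the average $m_f(y,2^k\mu(r))$ yields the per-shell bound
\[
\left|\int_{I_k}\varphi_r(t)f(y-t)\,dt\right|\le 4\,\FI(r)\,m_f(y,2^k\mu(r)).
\]

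For each $k$ I then take the supremum over $|x-y|<\lambda(r)$, $0<r<1$ and split into two subcases. When $|x-y|\ge 2^k\mu(r)$ the quantity $\FI(r)\,m_f(y,2^k\mu(r))$ matches the definition of $T_{2^k}f(x)$, so \lem{1.lemma-TAbound} gives a bound of order $(C\,M|f|^p(x)/2^k)^{1/p}$. When $|x-y|<2^k\mu(r)$ the interval $[y-2^k\mu(r),y+2^k\mu(r)]$ sits inside $[x-2^{k+1}\mu(r),x+2^{k+1}\mu(r)]$, which via Jensen's inequality yields $m_f(y,2^k\mu(r))\le(2M|f|^p(x))^{1/p}$; combining this with the a priori estimate $\FI(r)\le(C/2^k)^{1/p}$---obtained from condition $\mathit{2}$ together with $2^k\mu(r)\le\lambda(r)$---produces a bound of the same order.

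Summing the per-shell estimates over $k\ge 1$ becomes a geometric series with ratio $2^{-1/p}$, whose total is $(2^{1/p}-1)^{-1}$, and together with the prefactor $4$ this yields the claimed constant $4C^{1/p}/(2^{1/p}-1)$. The main technical point is the case split above: \lem{1.lemma-TAbound} by itself covers only the regime $|x-y|>A\mu(r)$, and the complementary regime must be handled by the direct inclusion of intervals combined with the a priori bound on $\FI(r)$.
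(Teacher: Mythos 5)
Your argument mirrors the paper's own proof essentially line for line: the same dyadic decomposition of the annulus $\{\mu(r)\le|t|\le\lambda(r)\}$ into shells $2^{k-1}\mu(r)\le|t|\le 2^k\mu(r)$, the same per-shell estimate of the form $\mathrm{const}\cdot\FI(r)\,m_f\bigl(y,2^k\mu(r)\bigr)$ using the bound $\varphi_r^*(2^{k-1}\mu(r))\le \FI(r)/(2^{k-1}\mu(r))$, and the same split of the supremum into the far regime $|x-y|>2^k\mu(r)$ (identified as $T_{2^k}f(x)$ and handled by Lemma~\ref{1.lemma-TAbound}) and the near regime $|x-y|\le 2^k\mu(r)$ (handled by interval inclusion plus the a priori bound $\FI(r)\le C^{1/p}2^{-k/p}$ coming from condition~\textit{2}), followed by summing the resulting geometric series. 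This is correct and is the approach the paper takes, so nothing further is needed.
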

\begin{proof}
Again, we may assume that $f$ is non-negative. Let $x, y\in\ZT, 0<r<1$ and $|x-y|<\lambda(r)$. If $Q=\lceil\log\frac{\lambda(r)}{\mu(r)}\rceil$, we split the integral in \e{1.sup-mulambda} as follows
\md9\label{1.varphir-bound}
&\left|\int_{\mu(r)\le|t|\le\lambda(r)} \varphi_r(t)f(y-t)\,dt\right|\\
&\le \sum_{k=1}^Q \int_{2^{k-1}\mu(r)\le|t|\le2^k\mu(r)} \varphi^*_r(t)f(y-t)\,dt\\
&\le \sum_{k=1}^Q \max\left(\varphi^*_r\left(2^{k-1}\mu(r)\right), \varphi^*_r\left(-2^{k-1}\mu(r)\right)\right) \int_{|t|\le 2^k\mu(r)}f(y-t)\,dt\\
&= 2 \sum_{k=1}^Q 2^{k-1}\mu(r)\max\left(\varphi^*_r\left(2^{k-1}\mu(r)\right), \varphi^*_r\left(-2^{k-1}\mu(r)\right)\right) m_f(y,2^k\mu(r))\\
&\le 2\sum_{k=1}^Q \FI(r)m_f(y,2^k\mu(r)).
\emd
Then we split the domain of supremum in the followoing way:
\md9\label{1.sup-bound-1}
\sup_{\substack{|x-y|<\lambda(r)\\0<r<1}} \FI(r)m_f(y,A\mu(r))
&\le \sup_{\substack{|x-y|\le A\mu(r)\le\lambda(r)\\0<r<1}} \FI(r)m_f(y,A\mu(r))\\
&+   \sup_{\substack{A\mu(r)<|x-y|<\lambda(r)\\0<r<1}} \FI(r)m_f(y,A\mu(r)).
\emd
Notice that the second supremum is $T_A f(x)$. To estimate the first supremum, note that $|x-y|\le A\mu(r)\le\lambda(r)\le C\mu(r)\FI(r)^{-p}$ implies
\md1\label{1.varphir-bound-1}
\FI(r)\le  C^{1/p}A^{-1/p},
\emd
where $C$ is the constant from the condition $\mathit{2}$ of \lem{1.lemma-TAbound}. On the other hand, from $|x-y|\le A\mu(r)$ it follows $m_f(y,A\mu(r))\le Mf(x)$, which together with \e{1.varphir-bound-1}, \e{1.sup-bound-1} and \lem{1.lemma-TAbound} gives
\md9\label{1.term-bound}
\sup_{\substack{|x-y|<\lambda(r)\\0<r<1}} \FI(r)m_f(y,A\mu(r))
&\le C^{1/p}A^{-1/p} Mf(x) + T_A f(x)\\
&\le C^{1/p}A^{-1/p} (Mf^p(x))^{1/p} + \left(C\cdot\frac{Mf^p(x)}{A}\right)^{1/p}\\
&\le 2 C^{1/p}A^{-1/p} (Mf^p(x))^{1/p}.
\emd
Using \e{1.varphir-bound} and \e{1.term-bound} we get
\md2
&\sup_{\substack{|x-y|<\lambda(r)\\0<r<1}}\left|\int_{\mu(r)\le|t|\le\lambda(r)} \varphi_r(t)f(y-t)\,dt \right|\\
&\mspace{160mu}\le 2\sum_{k=1}^Q \sup_{\substack{|x-y|<\lambda(r)\\0<r<1}} \FI(r)m_f(y,2^k\mu(r))\\
&\mspace{160mu}\le 4C^{1/p}\sum_{k=1}^\infty 2^{-k/p} (Mf^p(x))^{1/p}\\
&\mspace{160mu}= \frac{4C^{1/p}}{2^{1/p}-1} (Mf^p(x))^{1/p},
\emd
which gives \e{1.sup-mulambda}.
\end{proof}

\begin{lemma}\label{1.rough-bound}
Let $\{\varphi_r\}$ be an arbitrary approximate identity and for some $1\le p<\infty$ the function $\lambda(r)$ satisfies
\md1\label{1.new-PIp-bound}
\sup_{0<r<1}\lambda(r)\|\varphi_r\|_q^p < \infty,
\emd
where $q=p/(p-1)$ is the conjugate index of $p$. Then for any function $f\in L^p(\ZT)$
\md1\label{1.sup-rough-bound}
\sup_{\substack{|x-y|<\lambda(r)\\0<r<1}}\left|\int_{|t|\le\lambda(r)} \varphi_r(t)f(y-t)\,dt \right|
\le C \left(M|f|^p(x)\right)^{1/p},\quad x\in\ZT,
\emd
where $C$ does not depend on function $f$.
\end{lemma}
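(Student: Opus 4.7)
The plan is to estimate the integral pointwise via H\"older's inequality with conjugate exponents $p$ and $q$, treating $\varphi_r$ with its $L^q$ norm and collecting the $L^p$ mass of $f$ into a Hardy--Littlewood maximal bound. The hypothesis $\sup_r \lambda(r) \|\varphi_r\|_q^p < \infty$ is exactly what is needed to marry those two factors with the $\lambda(r)$ coming from the length of the integration interval.

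Concretely, I would fix $x,y\in\ZT$ and $r\in(0,1)$ with $|x-y|<\lambda(r)$, and apply H\"older's inequality on the interval $|t|\le\lambda(r)$ to get
\md0
\left|\int_{|t|\le\lambda(r)} \varphi_r(t) f(y-t)\,dt\right|
\le \|\varphi_r\|_q \left(\int_{|t|\le\lambda(r)}|f(y-t)|^p\,dt\right)^{1/p}.
\emd
(The case $p=1$, $q=\infty$ is handled identically by pulling out $\|\varphi_r\|_\infty$.) Since $|x-y|<\lambda(r)$, the translated interval $[y-\lambda(r),y+\lambda(r)]$ lies inside $[x-2\lambda(r),x+2\lambda(r)]$, so
\md0
\int_{|t|\le\lambda(r)}|f(y-t)|^p\,dt
\le \int_{x-2\lambda(r)}^{x+2\lambda(r)}|f(u)|^p\,du
\le 4\lambda(r)\, M|f|^p(x).
\emd

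Combining the two estimates yields
\md0
\left|\int_{|t|\le\lambda(r)} \varphi_r(t) f(y-t)\,dt\right|
\le 4^{1/p}\bigl(\lambda(r)\|\varphi_r\|_q^p\bigr)^{1/p}\bigl(M|f|^p(x)\bigr)^{1/p},
\emd
and taking the supremum over $r\in(0,1)$ and $y$ with $|x-y|<\lambda(r)$, the hypothesis \e{1.new-PIp-bound} gives an absolute constant $C$ independent of $f$, establishing \e{1.sup-rough-bound}.

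There is really no substantive obstacle here: the lemma is a soft consequence of H\"older's inequality. The only thing worth double-checking is the uniform handling of the endpoint $p=1$, where $q=\infty$ and the condition reduces to $\sup_r\lambda(r)\|\varphi_r\|_\infty<\infty$; in that case H\"older degenerates to pulling out $\|\varphi_r\|_\infty$ and the same argument goes through. This lemma is clearly meant as the \textquotedblleft trivial\textquotedblright\ complement to \lem{1.Phi*ineq1} and \lem{1.lemma-supmulambda}, covering the central portion $|t|\le\lambda(r)$ under a stronger hypothesis that is later replaced by the sharper $\TPI_p$ condition in the proof of \trm{1.weakpp}.
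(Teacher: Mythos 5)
Your proof is correct and matches the paper's argument essentially line for line: the same application of H\"older's inequality on $|t|\le\lambda(r)$, the same translation from $y$ to $x$ using $|x-y|<\lambda(r)$ to land inside a $2\lambda(r)$-window, and the same bound by $4\lambda(r)\,M|f|^p(x)$. The remark on the $p=1$, $q=\infty$ endpoint is a harmless and accurate addendum.
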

\begin{proof}
The proof immediately follows from applying H\"{o}lder\a s inequality to the integral:
\md8
\sup_{\substack{|x-y|<\lambda(r)\\0<r<1}}\left|\int_{|t|\le\lambda(r)} \varphi_r(t)f(y-t)\,dt \right|
&\le \sup_{\substack{|x-y|<\lambda(r)\\0<r<1}}\|\varphi_r\|_q \cdot \left(\int_{|t|\le\lambda(r)} |f(y-t)|^p\,dt\right)^{1/p}\\
&\le \sup_{0<r<1}\|\varphi_r\|_q \cdot \left(\int_{|t|\le 2\lambda(r)} |f(x-t)|^p\,dt\right)^{1/p}\\
&\le \sup_{0<r<1}\|\varphi_r\|_q (4\lambda(r))^{1/p} \cdot \left(M|f|^p(x)\right)^{1/p},
\emd
which implies \e{1.sup-rough-bound} taking into account \e{1.new-PIp-bound}.
\end{proof}

\begin{lemma}\label{1.phi-r-bound}
If $\{\varphi_r\}$ is an arbitrary approximate identity, then for some $r_0\in(0,1)$
\md0
\frac{c}{\log\|\varphi_r\|_\infty} \le \FI(r) \le C_\varphi, \quad r_0<r<1,
\emd
where $c$ is a positive absolute constant.
\end{lemma}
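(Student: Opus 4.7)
My plan is to handle the two inequalities separately. For the upper bound, I will use that $\varphi_r^*(x)$ depends only on $|x|$ and is non-increasing in $|x|$, directly from its definition as a supremum over a shrinking set. Hence $\varphi_r^*(t)\ge\varphi_r^*(x)$ whenever $|t|\le|x|$, and integrating on $[0,|x|]$ will yield
$$|x|\varphi_r^*(x)\le \int_0^{|x|}\varphi_r^*(t)\,dt\le\tfrac12\|\varphi_r^*\|_1\le C_\varphi,$$
giving $\FI(r)\le C_\varphi$ after taking the supremum in $x$.

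For the lower bound, I will first establish the preliminary fact that $\|\varphi_r\|_\infty\to\infty$ as $r\to 1$. Indeed, were $\|\varphi_{r_k}\|_\infty\le M$ along some subsequence $r_k\to 1$, then $\varphi_{r_k}^*\le M$ combined with $\varphi_{r_k}^*(t)\to 0$ for each $t\ne 0$ (which is $\Phi 2$) would, by dominated convergence, force $\int_\ZT\varphi_{r_k}^*\to 0$, hence $\int_\ZT|\varphi_{r_k}|\to 0$, contradicting $\Phi 1$. I will then pick $r_0\in(0,1)$ so that for every $r\in(r_0,1)$ both $\|\varphi_r\|_\infty$ exceeds some prescribed absolute constant (e.g.\ $\ge 8\pi$) and $\int_\ZT|\varphi_r(t)|\,dt\ge 1/2$.

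Writing $M=\|\varphi_r\|_\infty$ and $\epsilon=\FI(r)$, the definition of $\FI$ yields $\varphi_r^*(t)\le \epsilon/|t|$ for $t\ne 0$. I will split the integral at $\delta=1/(8M)$: the inner piece is trivially at most $2\delta M=1/4$, and for the outer piece a dyadic decomposition will give, with $K=\lceil\log_2(\pi/\delta)\rceil$,
$$\int_{\delta\le|t|\le\pi}|\varphi_r(t)|\,dt\le 2\sum_{k=0}^{K-1}\int_{2^k\delta}^{2^{k+1}\delta}\varphi_r^*(2^k\delta)\,dt\le 2\sum_{k=0}^{K-1}\frac{\epsilon}{2^k\delta}\cdot 2^k\delta=2K\epsilon.$$
Combining these bounds with $\int_\ZT|\varphi_r|\ge 1/2$ will give $1/4\le 2K\epsilon$, i.e.\ $\epsilon\ge 1/(8K)$; since $K\le c'\log M$ for $M$ sufficiently large, this produces $\FI(r)\ge c/\log\|\varphi_r\|_\infty$ with an absolute constant $c>0$.

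The only mildly delicate step is the preliminary claim $\|\varphi_r\|_\infty\to\infty$, without which the lower bound could be vacuous (or not even make sense). The dyadic estimate itself is routine, but the key observation is that each dyadic annulus contributes exactly $\epsilon$ rather than more, producing only a logarithmic factor in $M$ — and this is precisely what matches the sharp $1/\log\|\varphi_r\|_\infty$ dependence.
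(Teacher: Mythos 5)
Your proof is correct and follows essentially the same route as the paper's: the upper bound comes from the monotonicity of $\varphi_r^*$ together with $\Phi3$, and the lower bound from the pointwise estimate $\varphi_r^*(t)\le\FI(r)/|t|$, a split of $\int_\ZT\varphi_r$ at a threshold $\delta$ comparable to a negative power of $\|\varphi_r\|_\infty$, and $\Phi1$ (the dyadic sum you use is just a discretization of the paper's $\int_{\delta\le|t|\le\pi}dt/|t|=2\log(\pi/\delta)$). The one genuine addition is your dominated-convergence argument showing $\|\varphi_r\|_\infty\to\infty$, a fact the paper invokes at the end of its proof but does not justify.
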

\begin{proof}
Let $0<r<1$. Using the definitions of $\varphi^*_r(x)$ and $\FI(r)$ we conclude
\md0
\varphi_r(t) \le \varphi^*_r(t) \le \frac{\FI(r)}{|t|}, \quad t\in\ZT\setminus\{0\}.
\emd
Therefore, for a fixed $\delta\in(0,\pi)$ we have
\md8
1+o(1)
= \int_\ZT \varphi_r(t)\,dt
&\le \int_{|t|<\delta} \varphi^*_r(t)\,dt + \int_{\delta\le|t|\le\pi} \varphi^*_r(t)\,dt\\
&\le \|\varphi^*_r\|_\infty\int_{|t|<\delta}\,dt + \FI(r)\int_{\delta\le|t|\le\pi} \frac{dt}{|t|}\\
&= 2\delta\|\varphi_r\|_\infty + 2\FI(r)\log\frac{\pi}{\delta},
\emd
which implies
\md0
\FI(r) \ge \left(\frac{1}{2}+o(1)-\delta\|\varphi_r\|_\infty\right) \left(\log\frac{\pi}{\delta}\right)^{-1}.
\emd
Now, if we take $\delta = \pi/\|\varphi_r\|^2_\infty$, we get
\md0
\FI(r) \ge \left(\frac{1}{2}+o(1)-\frac{\pi}{\|\varphi_r\|_\infty}\right) \frac{1}{2\log\|\varphi_r\|_\infty},
\emd
which completes the proof of the first inequality (for example with $c=1/5$), since $\|\varphi_r\|_\infty\to\infty$ as $r\to1$. The second inequality can be deduced from the following:
\md0
\FI(r) = \sup_{x\in\ZT}|x\varphi^*_r(x)| \le \sup_{x\in\ZT}\left|\int_{|t|\le|x|}\varphi^*_r(t)\,dt\right| \le C_\varphi.
\emd
\end{proof}

\section{Proof of Theorems}\label{fatou-Lp}
\begin{proof}[Proof of \trm{1.weakpp}]
Without loss of generality we may assume that $f$ is non-negative.
Furthermore, we may assume that $\TPI_p\ge C_\varphi^p$ and $\lambda(r)\|\varphi_r\|_\infty\varphi_*^{p-1}(r)\ge C_\varphi^p$ for all $r\in(0,1)$. Otherwise, instead of $\lambda(r)$ we would define a new $\bar{\lambda}(r)$ as
\begin{equation*}
\bar{\lambda}(r) \deff \frac{\max\left(\TPI_p, C_\varphi^p\right)}{\|\varphi_r\|_\infty\varphi_*^{p-1}(r)} \ge \lambda(r), \quad 0<r<1,
\end{equation*}
for which those assumptions would hold. Denote $\mu(r)=\varphi_*(r)/\|\varphi_r\|_\infty$ and notice that
\begin{equation*}
\lambda(r) \ge \frac{C_\varphi^p}{\|\varphi_r\|_\infty\varphi_*^{p-1}(r)} \ge \frac{\varphi_*(r)}{\|\varphi_r\|_\infty}=\mu(r), \quad 0<r<1.
\end{equation*}

Let $x,y\in\ZT,\,0<r<1$ and $|x-y|<\lambda(r)$. We split the integral $\Phi_r(y,f)$ as follows
\md9\label{1.I123}
\Phi_r(y,f)
&= \int_\ZT \varphi_r(t)f(y-t)\,dt\\
&= \int_{|t|\le\mu(r)} \varphi_r(t)f(y-t)\,dt\\
&+ \int_{\mu(r)<|t|<\lambda(r)} \varphi_r(t)f(y-t)\,dt\\
&+ \int_{\lambda(r)\le|t|\le\pi} \varphi_r(t)f(y-t)\,dt = I^1 + I^2 + I^3.
\emd
First of all, from \lem{1.Phi*ineq1} we have
\md1\label{1.I3ineq}
\sup_{\substack{|x-y|<\lambda(r)\\0<r<1}} |I^3| \le 8 C_\varphi\cdot Mf(x).
\emd
Notice that from the condition $\TPI_p(\lambda, \varphi)<\infty$ it follows that
\md0
\lambda(r)\le \TPI_p\cdot\mu(r)\FI^{-p}(r).
\emd
Hence, from \lem{1.lemma-supmulambda} we get
\md1\label{1.I2ineq}
\sup_{\substack{|x-y|<\lambda(r)\\0<r<1}} |I^2| \le \frac{4\TPI_p^{1/p}}{2^{1/p}-1} \left(Mf^p(x)\right)^{1/p}.
\emd
Furthermore, using the definition of $\mu(r)$, for $I_1$ we obtain
\md8
|I^1|
&\le \int_{|t|\le\mu(r)} \varphi^*_r(t)f(y-t)\\
&\le \|\varphi_r\|_\infty \int_{-\mu(r)}^{\mu(r)}f(y-t)\,dt\\
&=  2\mu(r)\|\varphi_r\|_\infty m_f(y,\mu(r)) =  2\FI(r)m_f(y,\mu(r)),
\emd
where
\md0
m_f(y,t) = \frac{1}{2t}\int_{y-t}^{y+t}|f(u)|\,du, \quad y\in\ZT,\,t>0.
\emd
To estimate $I_1$ we split the supremum into two parts as we did in \lem{1.lemma-supmulambda}:
\md9\label{1.I1ineq}
\sup_{\substack{|x-y|\le\lambda(r)\\0<r<1}} |I^1|
&\le \sup_{\substack{|x-y|\le\mu(r)\le\lambda(r)\\0<r<1}} 2\FI(r)m_f(y,\mu(r))\\
&+ \sup_{\substack{\mu(r)<|x-y|<\lambda(r)\\0<r<1}} 2\FI(r)m_f(y,\mu(r))
\emd
Notice that the second supremum is $T_1 f(x)$, which can be estimated due to \lem{1.lemma-TAbound}. To estimate the first one, note that $\mu(r)\le\lambda(r)\le \TPI_p\mu(r)\FI(r)^{-p}$ implies
\md1\label{1.varphi-ineq}
\FI(r)\le\TPI_p^{1/p}.
\emd
On the other hand, $|x-y|\le\mu(r)$ implies $m_f(y,\mu(r))\le Mf(x)$, which together with \e{1.I1ineq}, \e{1.varphi-ineq} and \lem{1.lemma-TAbound} gives
\md9\label{1.I1ineq-1}
\sup_{\substack{|x-y|\le\mu(r)\le\lambda(r)\\0<r<1}} |I^1| \le 2\TPI_p^{1/p} \left(Mf^p(x)\right)^{1/p}.
\emd
Then, combining \e{1.I3ineq}, \e{1.I2ineq}, \e{1.I1ineq-1} and \e{1.I123}, we get
\md0
\sup_{\substack{|x-y|<\lambda(r)\\0<r<1}} \Phi_r(y,f)
\le \left( 2\TPI_p^{1/p} + \frac{4\TPI_p^{1/p}}{2^{1/p}-1} + 8 C_\varphi\right) \left(Mf^p(x)\right)^{1/p},
\emd
which implies \e{1.Phi*-bound}.
\end{proof}

\begin{proof}[Proof of \trm{1.fatouLp-example}]
From \e{1.extraCondition-phi} it follows that there exists $r_0\in(0,1)$ such that
\md1\label{1.cvarphi}
\frac{1}{\FI(r)}\int_{-\mu(r)}^{\mu(r)}|\varphi_r(t)|\,dt \ge \frac{c_\varphi}{2}
\emd
for any $r,\,r_0<r<1$. Denote
\md5
n(r) = \left[\frac{4\pi}{\lambda(r)}\right]\in\ZN,\label{1.nr}\\
\Delta_r = \bigcup_{k=0}^{n(r)-1}\left[\frac{2\pi k}{n(r)} - \mu(r), \frac{2\pi k}{n(r)} + \mu(r)\right],\label{1.Deltar}\\
\Lambda(r) = \lambda(r)\|\varphi_r\|_\infty\FI^{p-1}(r),\label{1.Lambdar}
\emd
If $x\in\ZT$ is an arbitrary point and $r_0<r<1$, then
\md0
x\in\left[\frac{2\pi k_0}{n(r)}, \frac{2\pi(k_0+1)}{n(r)}\right)
\emd
for some $k_0\in\{0, 1, \dots, n(r)-1\}$. Consider the function
\md1\label{1.fr-def}
f_r(x) = \frac{\Lambda^{1/p}(r)}{\FI(r)}\ZI_{\Delta_r}(x) \cdot \sgn\varphi_r\left(\frac{2\pi k_0}{n(r)}-x\right).
\emd
Note that
\md9\label{1.fr-Lp}
\|f_r\|_p^p
= \frac{\Lambda(r)}{\FI^p(r)}\cdot |\Delta_r| &\le \frac{\Lambda(r)}{\FI^p(r)}\cdot 2\mu(r)n(r)\\
&\le \frac{\lambda(r)\|\varphi_r\|_\infty\FI^{p-1}(r)}{\FI^p(r)}\cdot\frac{2\FI(r)}{\|\varphi_r\|_\infty}\cdot\frac{4\pi}{\lambda(r)} = 8\pi.
\emd
Clearly, taking $\theta = x-2\pi k_0/n(r)$, from \e{1.nr} we obtain
\md1\label{1.theta-bound}
|\theta| < \frac{2\pi}{n(r)} < \lambda(r).
\emd
Using the condition $\PI_p(\lambda, \varphi)=\infty$ and \lem{1.BV-means}, we may fix a sequence $r_k\nearrow 1$ such that
\md3
&\Lambda(r_k) > \left[2^{k+1}C_\varphi\cdot c_\varphi^{-1}\left(8\pi + k +\max_{1\le j<k}\frac{\Lambda^{1/p}(r_j)}{\FI(r_j)}\right)\right]^p,\quad  k=1,2,\ldots ,\label{1.Lambda-bound}\\
&\sup_{\theta\in\ZT}\frac{1}{|\Delta_{r_j}|}\int_{\Delta_{r_j}} \varphi^*_{r_k}(\theta-t)\,dt \le C_\varphi,\quad k=1, 2, \dots, j-1.\label{1.phi-bound-uniform}
\emd
In order to use \lem{1.BV-means} and get bounds (\ref{1.phi-bound-uniform}) we need to assure the assumption $\mu(r_k)<\frac{\pi}{n_k}$ holds. Notice that from (\ref{1.Lambda-bound}) and \lem{1.phi-r-bound} we have $\Lambda(r_k)>4C_\varphi^p\ge 4\varphi_*^p(r_k)$ which implies $\mu(r_k)<\frac{\lambda(r_k)}{4}\le \frac{\pi}{n_k}$.
Using \e{1.cvarphi} and \e{1.fr-def}, we get
\md9\label{1.Phir-bound-below}
\Phi_r(x-\theta,f_r)
&= \int_{\ZT} \varphi_r\left(\frac{2\pi k_0}{n(r)}-t\right)f_r(t)\,dt\\
&= \frac{\Lambda^{1/p}(r)}{\FI(r)} \int_{\Delta_r} \left|\varphi_r\left(\frac{2\pi k_0}{n(r)}-t\right)\right|\,dt\\
&\ge \frac{\Lambda^{1/p}(r)}{\FI(r)} \int_{2\pi k_0/n(r)-\mu(r)}^{2\pi k_0/n(r)+\mu(r)} \left|\varphi_r\left(\frac{2\pi k_0}{n(r)}-t\right)\right|\,dt\\
&= \frac{\Lambda^{1/p}(r)}{\FI(r)} \int_{-\mu(r)}^{\mu(r)} \left|\varphi_r(u)\right|du
\ge \frac{c_\varphi}{2} \Lambda^{1/p}(r).
\emd
Define
\md0
f(x)=\sum_{k=1}^\infty 2^{-k}f_{r_k}(x)\in L^p(\ZT).
\emd
We split $\Phi_{r_k}(\theta,f)$ in the following way
\md9\label{1.Phi-split-Lp}
\Phi_{r_k}(\theta,f)
&= \sum_{j=1}^\infty 2^{-j}\Phi_{r_k}(\theta, f_{r_j})\\
&= \sum_{j=1}^{k-1} 2^{-j}\Phi_{r_k}(\theta, f_{r_j}) + 2^{-k}\Phi_{r_k}(\theta, f_{r_k}) + \sum_{j=k+1}^\infty 2^{-j}\Phi_{r_k}(\theta, f_{r_j})\\
&= S^1 + S^2 + S^3.
\emd
From \e{1.theta-bound}, \e{1.Lambda-bound}, and \e{1.Phir-bound-below} it follows that
\md9\label{1.bound-S2-Lp}
\sup_{y\in\lambda(r_k,x)} S^2
&= \sup_{y\in\lambda(r_k,x)} 2^{-k}\Phi_{r_k}(y,f_{r_k})\\
&\ge 2^{-k}\Phi_{r_k}(x-\theta,f_{r_k})\\
&\ge 2^{-k-1}c_\varphi\Lambda^{1/p}(r_k)
\ge C_\varphi \left(8\pi + k +\max_{1\le j<k}\frac{\Lambda^{1/p}(r_j)}{\FI(r_j)}\right).
\emd
Furthermore, using \e{1.fr-def} and property $\mathit{\Phi3}$, we get
\md9\label{1.bound-S1-Lp}
\sup_{\theta\in\lambda(r_k,x)} |S^1|
&= \sup_{\theta\in\lambda(r_k,x)} \left|\sum_{j=1}^{k-1} 2^{-j}\int_\ZT \varphi_{r_k}(\theta-t)f_{r_j}(t)\,dt\right|\\
&\le \sup_{\theta\in\lambda(r_k,x)} \sum_{j=1}^{k-1} 2^{-j}\frac{\Lambda^{1/p}(r_j)}{\FI(r_j)} \int_{\Delta_{r_j}}|\varphi_{r_k}(\theta-t)|\,dt\\
&\le \sum_{j=1}^{k-1} 2^{-j}\frac{\Lambda^{1/p}(r_j)}{\FI(r_j)} \int_\ZT\varphi^*_{r_k}(u)\,du
\le C_\varphi\cdot\max_{1\le j<k} \frac{\Lambda^{1/p}(r_j)}{\FI(r_j)}.
\emd
Finally, using \e{1.fr-Lp} and \e{1.phi-bound-uniform} we get
\md9\label{1.bound-S3-Lp}
\sup_{\theta\in\lambda(r_k,x)} |S^3|
&= \sup_{\theta\in\lambda(r_k,x)} \left|\sum_{j=k+1}^\infty 2^{-j}\int_\ZT \varphi_{r_k}(\theta-t)f_{r_j}(t)\,dt\right|\\
&\le \sup_{\theta\in\lambda(r_k,x)} \sum_{j=k+1}^\infty 2^{-j}\frac{\Lambda^{1/p}(r_j)}{\FI(r_j)} \int_{\Delta_{r_j}}|\varphi_{r_k}(\theta-t)|\,dt\\
&\le \sum_{j=k+1}^\infty 2^{-j}\frac{\Lambda^{1/p}(r_j)}{\FI(r_j)}|\Delta_{r_j}|\cdot\sup_{\theta\in\ZT}\frac{1}{|\Delta_{r_j}|} \int_{\Delta_{r_j}}\varphi^*_{r_k}(\theta-t)\,dt \le 8\pi\cdot C_\varphi.
\emd
So, from \e{1.bound-S1-Lp}, \e{1.bound-S2-Lp}, \e{1.bound-S3-Lp} and \e{1.Phi-split-Lp} it follows
\md0
\sup_{\theta\in\lambda(r_k,x)} \Phi_{r_k}(\theta,f)
\ge \sup_{\theta\in\lambda(r_k,x)} S^2 - \sup_{\theta\in\lambda(r_k,x)} |S^1| - \sup_{\theta\in\lambda(r_k,x)} |S^3|
\ge C_\varphi\cdot k,
\emd
which imples \e{1.Lp-divergence}.
\end{proof}

\section{Final Remarks}\label{final_remarks}
Observe that the bound $\PI_p(\lambda,\varphi)<\infty$ for $1<p<\infty$ determines the exact rate of $\lambda(r)$ only for approximate identities satisfying $c_\varphi>0$. In fact, from \lem{1.Phi*ineq1} and \lem{1.rough-bound} it follows that \trm{1.fatouLp} and \trm{1.weakpp} hold if we replace the condition $\PI_p(\lambda,\varphi)<\infty$ by
\md1\label{0.sup-rough-bound}
\limsup_{r\to1}\lambda(r)\|\varphi_r\|_q^p < \infty,
\emd
where $q=p/(p-1)$ is the conjugate index of $p$. Thus, \otrm{OldCarlsson} is valid for any approximate identity, not necessarily non-negative and without the regular level sets assumption. One can check that in case of $c_\varphi=0$, the bound \e{0.sup-rough-bound} can give better convergence regions than $\PI_p(\lambda,\varphi)<\infty$ does. However, in this case it is unclear what is the exact bound for $\lambda(r)$ ensuring almost everywhere $\lambda(r)-$convergence.

\thebibliography{99}

\bibitem{Aik3}
Aikawa H., \emph{Fatou and Littlewood theorems for Poisson integrals with respect to non-integrable kernels}, Complex Var. Theory Appl. 49(7-9)(2004), 511–528.

\bibitem{Ben}
Benedetto J. J., \emph{Harmonic Analysis and Applications}, Birkhäuser Boston, 2006.

\bibitem{Bru}
Brundin M., \emph{Boundary behavior of eigenfunctions for the hyperbolic Laplacian}, Ph.D. thesis, Department of Mathematics, Chalmers University, 2002

\bibitem{Car}
Carlsson M., \emph{Fatou-type theorems for general approximate identities}, Math. Scand., 102, (2008), 231–252.

\bibitem{DiBi}
Di Biase F., \emph{Tangential curves and Fatou's theorem on trees}, J. London Math. Soc., 1998, vol. 58, no. 2, 331--341.

\bibitem{DiBi2}
Di Biase F., \emph{Fatou Type Theorems: Maximal Functions and Approach Regions}, Birkhäuser Boston, 1998.

\bibitem{BSSW}
Di Biase F., Stokolos A., Svensson O. and Weiss T.,  \emph{On the sharpness of the Stolz approach}, Annales Acad. Sci. Fennicae, 2006, vol. 31,  47--59.

\bibitem{Fat}
Fatou P.,  \emph{S\'{e}ries trigonom\'{e}triques et s\'{e}ries de Taylor, Acta Math.},  1906, vol. 30, 335--400.

\bibitem{KarSaf1}
Karagulyan G. A., Safaryan  M. H., \emph{On generalizations of Fatou\a s theorem for the integrals with general kernels}, Journal of Geometric Analysis, 2014 Volume 25, Issue 3, pp 1459-1475.

\bibitem{Kro}
Katkovskaya I. N. and Krotov  V. G., \emph{Strong-Type Inequality for Convolution with Square Root of the Poisson Kernel}, Mathematical Notes, 2004, vol. 75, no. 4, 542–552.

\bibitem{Katz}
Katznelson Y., \emph{An introduction to Harmonic Analysis}, Cambridge University Press, 2004.

\bibitem{Kor}
Kor\'anyi A., \emph{Harmonic functions on Hermitian hyperbolic space}, Trans. Amer. Math. Soc., 135(1969), 507–516.

\bibitem{Kro2}
Krotov V. G., \emph{Tangential boundary behavior of functions of several variables}, Mathematical Notes, 68(2)(2000), 201–216.

\bibitem{MizShi}
Mizuta Y. and Shimomura T., \emph{Growth properties for modified Poisson integrals in a half space}, Pacific J. Math., 212, (2003), 333-346.

\bibitem{NaSt}
Nagel A. and Stein E. M.,  \emph{On certain maximal functions and approach regions}, Adv. Math., 1984, vol. 54, 83--106.

\bibitem{Ron1}
R\"{o}nning J.-O.,  \emph{Convergence results for the square root of the Poisson kernel}, Math. Scand., 1997, vol. 81, no. 2, 219--235.

\bibitem{Ron2}
R\"{o}nning J.-O.,  \emph{On convergence for the square root of the Poisson kernel in symmetric spaces of rank 1}, Studia Math., 1997, vol. 125, no. 3, 219--229.

\bibitem{Ron3}
R\"{o}nning J.-O.,  \emph{Convergence results for the square root of the Poisson kernel in the bidisk }, Math. Scand., 1999, vol. 84, no. 1, 81--92.

\bibitem{Sae}
Saeki S.,  \emph{On Fatou-type theorems for non radial kernels, Math. Scand.}, 1996, vol. 78, 133--160.

\bibitem{Sog1}
Sj\"{o}gren P.,  \emph{Une remarque sur la convergence des fonctions propres du laplacien \`{a} valeur propre critique}, Th\'{e}orie du potentiel (Orsay, 1983), Lecture Notes in Math., vol. 1096, Springer, Berlin, 1984, pp. 544-548

\bibitem{Sog2}
Sj\"{o}gren P.,  \emph{Convergence for the square root of the Poisson kernel, Pacific J. Math.}, 1988, vol. 131, no 2, 361--391.

\bibitem{Sog3}
Sj\"{o}gren P.,  \emph{Approach regions for the square root of the Poisson kernel and bounded functions}, Bull. Austral. Math. Soc., 1997, vol. 55, no 3, 521--527.

\bibitem{Ste}
Stein E. M., \emph{Harmonic Analysis: Real-Variable Methods, Orthogonality, and Oscillatory Integrals}, Princeton University Press, 1993.

\bibitem{Sue}
Sueiro J., \emph{On Maximal Functions and Poisson-Szeg\"{o} Integrals}, Transactions of the American Mathematical Society, Vol. 298, No. 2 (1986), pp. 653-669.

 \end{document}